\title{Asymptotics for Quasi-Stationary Distributions of Perturbed Discrete Time Semi-Markov Processes}
\author{Mikael Petersson\footnote{Department of Mathematics, Stockholm University, SE-106 91 Stockholm, Sweden, mikpe@math.su.se.}}
\date{}
\newtheorem{theorem}{Theorem}
\newtheorem{lemma}{Lemma}
\begin{document}

\maketitle

\begin{abstract}
In this paper, we study quasi-stationary distributions of nonlinearly perturbed semi-Markov processes in discrete time. This type of distributions is of interest for the analysis of stochastic systems which have finite lifetimes, but are expected to persist for a long time. We obtain asymptotic power series expansions for quasi-stationary distributions and it is shown how the coefficients in these expansions can be computed from a recursive algorithm. As an illustration of this algorithm, we present a numerical example for a discrete time Markov chain.
\end{abstract}

\noindent \textbf{Keywords:} Semi-Markov process, Perturbation, Quasi-stationary distribution, Asymptotic expansion, Renewal equation, Markov chain. \\
\\
\textbf{MSC2010:} Primary 60K15; Secondary 41A60, 60J10, 60K05.

\section{Introduction}

This paper is a sequel of Petersson (2016) where recursive algorithms for computing asymptotic expansions of moment functionals for non-linearly perturbed semi-Markov processes in discrete time where presented. Here, these expansions play a fundamental role for constructing asymptotic expansions of quasi-stationary distributions for such processes. Let us remark that all notation, conditions, and key results which we need here are repeated. However, some extensive formulas needed for computation of coefficients in certain asymptotic expansions are not repeated. Thus, the present paper is essentially self-contained.

Quasi-stationary distributions are useful for studies of stochastic systems with random lifetimes. Usually, for such systems, the evolution of some quantity of interest is described by some stochastic process and the lifetime of the system is the first time this process hits some absorbing subset of the state space. For such processes, the stationary distribution will be concentrated on this absorbing subset. However, if we expect that the system will persist for a long time, the stationary distribution may not be an appropriate measure for describing the long time behaviour of the process. Instead, it might be more relevant to consider so-called quasi-stationary distributions. This type of distributions is obtained by taking limits of transition probabilities which are conditioned on the event that the process has not yet been absorbed.

Models of the type described above arise in many areas of applications such as epidemics, genetics, population dynamics, queuing theory, reliability, and risk theory. For example, in population dynamics models the number of individuals may be modelled by some stochastic process and we can consider the extinction time of the population as the lifetime. In epidemic models, the process may describe the evolution of the number of infected individuals and we can regard the end of the epidemic as the lifetime.

We consider, for every $\varepsilon \geq 0$, a discrete time semi-Markov process $\xi^{(\varepsilon)}(n)$, $n=0,1,\ldots,$ on a finite state space $X = \{ 0,1,\ldots,N \}$. It is assumed that the process $\xi^{(\varepsilon)}(n)$ depends on $\varepsilon$ in such a way that its transition probabilities are functions of $\varepsilon$ which converge pointwise to the transition probabilities for the limiting process $\xi^{(0)}(n)$. Thus, we can interpret $\xi^{(\varepsilon)}(n)$, for $\varepsilon > 0$, as a perturbation of $\xi^{(0)}(n)$. Furthermore, it is assumed that the states $\{ 1,\ldots,N \}$ is a communicating class for $\varepsilon$ small enough.

Under conditions mentioned above, some additional assumptions of finite exponential moments of distributions of transition times, and a condition which guarantees that the limiting semi-Markov process is non-periodic, a unique quasi-stationary distribution, independent of the initial state, can be defined for each sufficiently small $\varepsilon$ by the following relation,
\begin{equation*}
 \pi_j^{(\varepsilon)} = \lim_{n \rightarrow \infty} \mathsf{P}_i \{ \xi^{(\varepsilon)}(n) = j \, | \, \mu_0^{(\varepsilon)} > n \}, \ i,j \neq 0,
\end{equation*}
where $\mu_0^{(\varepsilon)}$ is the first hitting time of state $0$.

In the present paper, we are interested in the asymptotic behaviour of the quasi-stationary distribution as the perturbation parameter $\varepsilon$ tends to zero. Specifically, an asymptotic power series expansion of the quasi-stationary distribution is constructed.

We allow for nonlinear perturbations, i.e., the transition probabilities may be nonlinear functions of $\varepsilon$. We do, however, restrict our consideration to smooth perturbations by assuming that certain mixed power-exponential moment functionals for transition probabilities, up to some order $k$, can be expanded in asymptotic power series with respect to $\varepsilon$.

In this case, we show that the quasi-stationary distribution has the following asymptotic expansion,
\begin{equation} \label{eq:expansionintro}
 \pi_j^{(\varepsilon)} = \pi_j^{(0)} + \pi_j[1] \varepsilon + \cdots + \pi_j[k] \varepsilon^k + o(\varepsilon^k), \ j \neq 0,
\end{equation}
where the coefficients $\pi_j[1],\ldots,\pi_j[k]$, can be calculated from explicit recursive formulas. These formulas are functions of the coefficients in the expansions of the moment functionals mentioned above. The existence of the expansion \eqref{eq:expansionintro} and the algorithm for computing the coefficients in this expansion is the main result of this paper.

It is worth mentioning that the asymptotic relation given by equation \eqref{eq:expansionintro} simultaneously cover three different cases. In the simplest case, there exists $\varepsilon_0 > 0$ such that transitions to state $0$ are not possible for any $\varepsilon \in [0,\varepsilon_0]$. In this case, relation \eqref{eq:expansionintro} gives asymptotic expansions for stationary distributions. Then, we have an intermediate case where transitions to state $0$ are possible for all $\varepsilon \in (0,\varepsilon_0]$ but not possible for $\varepsilon = 0$. In this case we have that $\mu_0^{(\varepsilon)} \rightarrow \infty$ in probability as $\varepsilon \rightarrow 0$. In the mathematically most difficult case, we have that transitions to state $0$ are possible for all $\varepsilon \in [0,\varepsilon_0]$. In this case, the random variables $\mu_0^{(\varepsilon)}$ are stochastically bounded as $\varepsilon \rightarrow 0$.

The expansion \eqref{eq:expansionintro} is given for continuous time semi-Markov processes in Gyllenberg and Silvestrov (1999, 2008). However, the discrete time case is interesting in its own right and deserves a special treatment. In particular, a discrete time model is often a natural choice in applications where measures of some quantity of interest are only available at given time points, for example days or months. The proof of the result for the continuous time case, as well as the proofs in the present paper, is based on the theory of non-linearly perturbed renewal equations. For results related to continuous time in this line of research, we refer to the comprehensive book by Gyllenberg and Silvestrov (2008), which also contains an extensive bibliography of work in related areas. The corresponding theory for discrete time renewal equations has been developed in Gyllenberg and Silvestrov (1994), Englund and Silvestrov (1997), Silvestrov and Petersson (2013), and Petersson (2014a, b, 2015).

Quasi-stationary distributions have been studied extensively since the 1960's. For some of the early works on Markov chains and semi-Markov processes, see, for example, Vere-Jones (1962), Kingman (1963), Darroch and Seneta (1965), Seneta and Vere-Jones (1966), Cheong (1968, 1970), and Flaspohler and Holmes (1972). A survey of quasi-stationary distributions for models with discrete state spaces and more references can be found in van Doorn and Pollett (2013).

Studies of asymptotic properties for first hitting times, stationary distributions, and other characteristics for Markov chains with linear, polynomial, and analytic perturbations have attracted a lot of attention, see, for example, Simon and Ando (1961), Schweitzer (1968), Ga{\u \i}tsgori and Pervozvanski{\u \i} (1975), Courtois and Louchard (1976), Latouche and Louchard (1978), Delebecque (1983), Latouche (1991), Stewart (1991), Hassin and Haviv (1992), Yin and Zhang (1998, 2003), Altman, Avrachenkov, and N{\'u}{\~n}ez-Queija (2004), Avrachenkov and Haviv (2004), and Avrachenkov, Filar, and Howlett (2013). Recently, some of the results of these papers have been extended to non-linearly perturbed semi-Markov processes. Using a method of sequential phase space reduction, asymptotic expansions for expected first hitting times and stationary distributions are given in Silvestrov and Silvestrov (2015). This paper also contains an extensive bibliography.

Let us now briefly comment on the structure of the present paper. In Section \ref{sec:mainresult}, most of the notation we need are introduced and the main result is formulated. We apply the discrete time renewal theorem in order to get a formula for the quasi-stationary distribution in Section \ref{sec:quasistationary} and then the proof of the main result is presented in Section \ref{sec:proof}. Finally, in Section \ref{sec:markovchains}, we illustrate the results in the special case of discrete time Markov chains.

\section{Main Result} \label{sec:mainresult}

In this section we first introduce most of the notation that will be used in the present paper and then we formulate the main result. 

For each $\varepsilon > 0$, let $\xi^{(\varepsilon)}(n)$, $n=0,1,\ldots,$ be a discrete time semi-Markov process on the state space $X = \{0,1,\ldots,N\}$, generated by the discrete time Markov renewal process $(\eta_n^{(\varepsilon)},\kappa_n^{(\varepsilon)})$, $n=0,1,\ldots,$ having state space $X \times \{ 1,2,\ldots \}$ and transition probabilities
\begin{equation*}
 Q_{i j}^{(\varepsilon)}(k) = \mathsf{P} \{ \eta_{n+1}^{(\varepsilon)} = j, \ \kappa_{n+1}^{(\varepsilon)} = k \, | \, \eta_n^{(\varepsilon)} = i, \ \kappa_n^{(\varepsilon)} = l \}, \ k,l = 1,2,\ldots, \ i,j \in X.
\end{equation*}
We can write the transition probabilities as $Q_{i j}^{(\varepsilon)}(k) = p_{i j}^{(\varepsilon)} f_{i j}^{(\varepsilon)}(k)$, where $p_{i j}^{(\varepsilon)}$ are transition probabilities for the embedded Markov chain $\eta_n^{(\varepsilon)}$ and
\begin{equation*}
 f_{i j}^{(\varepsilon)}(k) = \mathsf{P} \{ \kappa_{n+1}^{(\varepsilon)} = k \, | \, \eta_n^{(\varepsilon)} = i, \ \eta_{n+1}^{(\varepsilon)} = j \}, \ k = 1,2,\ldots, \ i,j \in X,
\end{equation*}
are conditional distributions of transition times.

Let us here remark that definitions of discrete time semi-Markov processes and Markov renewal processes can be found in, for example, Petersson (2016).

For each $j \in X$, let $\nu_j^{(\varepsilon)} = \min \{ n \geq 1 : \eta_n^{(\varepsilon)} = j \}$ and $\mu_j^{(\varepsilon)} = \kappa_1^{(\varepsilon)} + \cdots + \kappa^{(\varepsilon)}_{\nu_j^{(\varepsilon)}}$. By definition, $\nu_j^{(\varepsilon)}$ and $\mu_j^{(\varepsilon)}$ are the first hitting times of state $j$ for the embedded Markov chain and the semi-Markov process, respectively.

In what follows, we use $\mathsf{P}_i$ and $\mathsf{E}_i$ to denote probabilities and expectations conditioned on the event $\{ \eta_0^{(\varepsilon)} = i \}$.

Let us define
\begin{equation*}
 g_{i j}^{(\varepsilon)}(n) = \mathsf{P}_i \{ \mu_j^{(\varepsilon)} = n, \ \nu_0^{(\varepsilon)} > \nu_j^{(\varepsilon)} \}, \ n =0,1,\ldots, \ i,j \in X,
\end{equation*}
and
\begin{equation*}
 g_{i j}^{(\varepsilon)} = \mathsf{P}_i \{ \nu_0^{(\varepsilon)} > \nu_j^{(\varepsilon)} \}, \ i,j \in X.
\end{equation*}
The functions $g_{i j}^{(\varepsilon)}(n)$ define discrete probability distributions which may be improper, i.e., $\sum_{n=0}^\infty g_{i j}^{(\varepsilon)}(n) = g_{i j}^{(\varepsilon)} \leq 1$.

Let us also define the following mixed power-exponential moment functionals,
\begin{equation*}
 p_{i j}^{(\varepsilon)}(\rho,r) = \sum_{n=0}^\infty n^r e^{\rho n} Q_{i j}^{(\varepsilon)}(n), \ \rho \in \mathbb{R}, \ r=0,1,\ldots, \ i,j \in X,
\end{equation*}
\begin{equation*}
 \phi_{i j}^{(\varepsilon)}(\rho,r) = \sum_{n=0}^\infty n^r e^{\rho n} g_{i j}^{(\varepsilon)}(n), \ \rho \in \mathbb{R}, \ r=0,1,\ldots, \ i,j \in X,
\end{equation*}
\begin{equation*}
 \omega_{i j s}^{(\varepsilon)}(\rho,r) = \sum_{n=0}^\infty n^r e^{\rho n} h_{i j s}^{(\varepsilon)}(n), \ \rho \in \mathbb{R}, \ r=0,1,\ldots, \ i,j,s \in X,
\end{equation*}
where $h_{i j s}^{(\varepsilon)}(n) = \mathsf{P}_i \{ \xi^{(\varepsilon)}(n) = s, \ \mu_0^{(\varepsilon)} \wedge \mu_j^{(\varepsilon)} > n \}$. For convenience, we denote
\begin{equation*}
 p_{i j}^{(\varepsilon)}(\rho) = p_{i j}^{(\varepsilon)}(\rho,0), \quad \phi_{i j}^{(\varepsilon)}(\rho) = \phi_{i j}^{(\varepsilon)}(\rho,0), \quad \omega_{i j s}^{(\varepsilon)}(\rho) = \omega_{i j s}^{(\varepsilon)}(\rho,0).
\end{equation*}

We now introduce the following conditions:

\begin{enumerate}
\item[$\mathbf{A}$:]
\begin{enumerate}
\item[$\mathbf{(a)}$] $p_{i j}^{(\varepsilon)} \rightarrow p_{i j}^{(0)}$, as $\varepsilon \rightarrow 0$, $i \neq 0$, $j \in X$.
\item[$\mathbf{(b)}$] $f_{i j}^{(\varepsilon)}(n) \rightarrow f_{i j}^{(0)}(n)$, as $\varepsilon \rightarrow 0$, $n=1,2,\ldots,$ $i \neq 0$, $j \in X$.
\end{enumerate}
\end{enumerate}

\begin{enumerate}
\item[$\mathbf{B}$:] $g_{i j}^{(0)} > 0$, $i,j \neq 0$.
\end{enumerate}

\begin{enumerate}
\item[$\mathbf{C}$:] There exists $\beta > 0$ such that:
\begin{enumerate}
\item[$\mathbf{(a)}$] $\limsup_{0 \leq \varepsilon \rightarrow 0} p_{i j}^{(\varepsilon)}(\beta) < \infty$, for all $i \neq 0$, $j \in X$.
\item[$\mathbf{(b)}$] $\phi_{i i}^{(0)}(\beta_i) \in (1,\infty)$, for some $i \neq 0$ and $\beta_i \leq \beta$.
\end{enumerate}
\end{enumerate}

\begin{enumerate}
\item[$\mathbf{D}$:] $g_{i i}^{(0)}(n)$ is a non-periodic distribution for some $i \neq 0$.
\end{enumerate}

Under the conditions stated above, there exists, for sufficiently small $\varepsilon$, so-called quasi-stationary distributions, which are independent of the initial state $i \neq 0$, and given by the relation
\begin{equation} \label{eq:qsddef}
 \pi_j^{(\varepsilon)} = \lim_{n \rightarrow \infty} \mathsf{P}_i \{ \xi^{(\varepsilon)}(n) = j \, | \, \mu_0^{(\varepsilon)} > n \}, \ j \neq 0.
\end{equation}

An important role for the quasi-stationary distribution is played by the following characteristic equation,
\begin{equation} \label{eq:chareq}
 \phi_{i i}^{(\varepsilon)}(\rho) = 1,
\end{equation}
where $i \neq 0$ is arbitrary.

The following lemma summarizes some important properties for the root of equation \eqref{eq:chareq}. A proof is given in Petersson (2016).

\begin{lemma} \label{lmm:chareq}
 Under conditions $\mathbf{A}$--$\mathbf{C}$ there exists, for sufficiently small $\varepsilon$, a unique non-negative solution $\rho^{(\varepsilon)}$ of the characteristic equation \eqref{eq:chareq} which is independent of $i$. Moreover, $\rho^{(\varepsilon)} \rightarrow \rho^{(0)}$, as $\varepsilon \rightarrow 0$.
\end{lemma}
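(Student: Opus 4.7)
The strategy is to handle $\varepsilon = 0$ by an intermediate-value argument applied to $\phi^{(0)}_{i_0 i_0}(\cdot)$ for the distinguished state $i_0$ of condition $\mathbf{C(b)}$, then transfer existence, uniqueness and convergence to small $\varepsilon > 0$ via continuous dependence of moment generating functions on $\varepsilon$, and finally remove the dependence on the initial state $i$ through a taboo-path identity.

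First, fix $i_0 \neq 0$ and $\beta_{i_0} \leq \beta$ as in $\mathbf{C(b)}$. Since $g^{(0)}_{i_0 i_0}(\cdot)$ is a non-negative sub-distribution with $\phi^{(0)}_{i_0 i_0}(\beta_{i_0}) < \infty$, the map $\rho \mapsto \phi^{(0)}_{i_0 i_0}(\rho)$ is continuous, convex, and (using $\mathbf{B}$ and $\mathbf{D}$ to rule out a degenerate $g^{(0)}_{i_0 i_0}$ concentrated at a single point) strictly increasing on $[0, \beta_{i_0}]$. Because $\phi^{(0)}_{i_0 i_0}(0) = g^{(0)}_{i_0 i_0} \leq 1$ and $\phi^{(0)}_{i_0 i_0}(\beta_{i_0}) > 1$, the intermediate value theorem produces a unique $\rho^{(0)} \in [0, \beta_{i_0}]$ with $\phi^{(0)}_{i_0 i_0}(\rho^{(0)}) = 1$.

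Next, condition $\mathbf{A}$ gives the pointwise convergence $Q^{(\varepsilon)}_{ij}(n) \to Q^{(0)}_{ij}(n)$ for each fixed $n$. Since $g^{(\varepsilon)}_{i_0 i_0}(n)$ is a finite sum over length-$n$ paths avoiding $0$ of products of $Q^{(\varepsilon)}$'s, this transfers to $g^{(\varepsilon)}_{i_0 i_0}(n) \to g^{(0)}_{i_0 i_0}(n)$ for every $n$. The uniform bound in $\mathbf{C(a)}$, via a path-sum majorisation of $g^{(\varepsilon)}_{i_0 i_0}(n)$ by entries of powers of the matrix $[p^{(\varepsilon)}_{ij}(\beta)]$, provides a summable envelope for $e^{\rho n} g^{(\varepsilon)}_{i_0 i_0}(n)$ on any $[0, \rho^*]$ with $\rho^* < \beta$. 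Dominated convergence then yields $\phi^{(\varepsilon)}_{i_0 i_0}(\rho) \to \phi^{(0)}_{i_0 i_0}(\rho)$ uniformly on compact sub-intervals of $[0, \beta)$. Combined with the strict monotonicity of $\phi^{(0)}_{i_0 i_0}$ at $\rho^{(0)}$, this forces, for all sufficiently small $\varepsilon$, a unique $\rho^{(\varepsilon)}$ solving $\phi^{(\varepsilon)}_{i_0 i_0}(\rho^{(\varepsilon)}) = 1$, together with $\rho^{(\varepsilon)} \to \rho^{(0)}$.

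The final and main obstacle is to remove the dependence on the initial state. For arbitrary $i, j \neq 0$, condition $\mathbf{B}$ supplies $g^{(0)}_{ij}, g^{(0)}_{ji} > 0$, hence $\phi^{(\varepsilon)}_{ij}(\rho), \phi^{(\varepsilon)}_{ji}(\rho) > 0$ throughout the relevant domain for small $\varepsilon$. Decomposing first-return paths to $i$ according to whether they visit $j$, and summing the resulting geometric series over excursions from $j$ back to $j$ that avoid $i$ and $0$, one derives an identity schematically of the form
\begin{equation*}
 1 - \phi^{(\varepsilon)}_{ii}(\rho) \;=\; R^{(\varepsilon)}_{ij}(\rho)\,\bigl(1 - \phi^{(\varepsilon)}_{jj}(\rho)\bigr),
\end{equation*}
with $R^{(\varepsilon)}_{ij}(\rho) > 0$ in a neighbourhood of the root. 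This yields $\phi^{(\varepsilon)}_{ii}(\rho) = 1 \Leftrightarrow \phi^{(\varepsilon)}_{jj}(\rho) = 1$, so $\rho^{(\varepsilon)}$ is independent of $i$. The delicate part is making the taboo decomposition rigorous with the correct positivity of $R^{(\varepsilon)}_{ij}(\rho)$ at $\rho = \rho^{(\varepsilon)}$; once established, the IVT and dominated-convergence steps are routine given the finite exponential moments from $\mathbf{C}$.
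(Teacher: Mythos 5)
The paper itself does not prove Lemma~\ref{lmm:chareq}; it cites Petersson (2016). Your architecture---an intermediate-value argument at $\varepsilon=0$ using $\mathbf{C(b)}$, a perturbation step to transfer the root to small $\varepsilon>0$, and a taboo decomposition to remove the $i$-dependence---is the standard route for this class of results. The taboo-path identity $1-\phi^{(\varepsilon)}_{ii}(\rho)=R^{(\varepsilon)}_{ij}(\rho)\,\bigl(1-\phi^{(\varepsilon)}_{jj}(\rho)\bigr)$, obtained by splitting $i$-excursions according to visits to $j$ and summing the geometric series of $j$-loops taboo on $\{0,i\}$, is correct, and positivity of $R^{(\varepsilon)}_{ij}$ at the root does follow once $\mathbf{B}$ is used. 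One small correction: you invoke $\mathbf{D}$ in the IVT step, but $\mathbf{D}$ is not among the lemma's hypotheses and is not needed; strict monotonicity of $\rho\mapsto\phi^{(0)}_{i_0i_0}(\rho)$ follows from $\mathbf{B}$ alone, because $g^{(0)}_{i_0i_0}>0$ and the distribution $g^{(0)}_{i_0i_0}(\cdot)$ is supported on $n\geq 1$.

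The substantive gap is in the dominated-convergence step. You claim that $\mathbf{C(a)}$, via a path-sum majorisation by entries of powers of $[p^{(\varepsilon)}_{ij}(\beta)]$, yields a summable envelope for $e^{\rho n}g^{(\varepsilon)}_{i_0i_0}(n)$ on every $[0,\rho^*]$ with $\rho^*<\beta$. This is not correct. Since
\begin{equation*}
\phi^{(\varepsilon)}_{i_0i_0}(\rho)=\sum_{m\geq 1}\ \sum_{\text{taboo paths of length } m}\ \prod_{l=1}^{m}p^{(\varepsilon)}_{i_{l-1}i_l}(\rho),
\end{equation*}
a uniform entrywise bound $p^{(\varepsilon)}_{ij}(\beta)\leq M$ from $\mathbf{C(a)}$ produces only the divergent majorant $\sum_m (NM)^m$; nothing in $\mathbf{C(a)}$ controls the spectral radius of the taboo matrix $\bigl[p^{(\varepsilon)}_{ij}(\rho)\bigr]$ for $\rho$ near $\beta$, and the convergence abscissa of $\phi^{(\varepsilon)}_{i_0i_0}$ is generically strictly below $\beta$. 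What is actually needed is (i) finiteness of $\phi^{(0)}_{i_0i_0}$ up to $\beta_{i_0}$ from $\mathbf{C(b)}$, giving $\rho^{(0)}<\beta_{i_0}$, and (ii) a genuine perturbation argument showing that the convergence abscissa of $\phi^{(\varepsilon)}_{i_0i_0}$ remains above some fixed $\delta>\rho^{(0)}$ for all small $\varepsilon$, together with $\phi^{(\varepsilon)}_{i_0i_0}(\rho)\to\phi^{(0)}_{i_0i_0}(\rho)$ for $\rho\leq\delta$. Step (ii) is nontrivial; it is precisely the content of Lemma~\ref{lmm:convergence} (proved in Petersson, 2016) and cannot be replaced by the blanket envelope you propose. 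Once that is supplied, the rest of your argument goes through.
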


In order to construct an asymptotic expansion for the quasi-stationary distribution, we need a perturbation condition for the transition probabilities $Q_{i j}^{(\varepsilon)}(k)$ which is stronger than $\mathbf{A}$. This condition is formulated in terms of the moment functionals $p_{i j}^{(\varepsilon)}(\rho^{(0)},r)$.

\begin{enumerate}
\item[$\mathbf{P_k}$:] $p_{i j}^{(\varepsilon)}(\rho^{(0)},r) = p_{i j}^{(0)}(\rho^{(0)},r) + p_{i j}[\rho^{(0)},r,1] \varepsilon + \cdots + p_{i j}[\rho^{(0)},r,k-r] \varepsilon^{k-r} + o(\varepsilon^{k-r})$, for $r=0,\ldots,k$, $i \neq 0$, $j \in X$, where $|p_{i j}[\rho^{(0)},r,n]| < \infty$, for $r=0,\ldots,k$, $n=1,\ldots,k-r$, $i \neq 0$, $j \in X$.
\end{enumerate}

The following theorem is the main result of this paper. The proof is given in Section \ref{sec:proof}.

\begin{theorem} \label{thm:mainresult}
If conditions $\mathbf{A}$--$\mathbf{D}$ and $\mathbf{P_{k+1}}$ hold, then we have the following asymptotic expansion,
\begin{equation*}
 \pi_j^{(\varepsilon)} = \pi_j^{(0)} + \pi_j[1] \varepsilon + \cdots + \pi_j[k] \varepsilon^k + o(\varepsilon^k), \ j \neq 0,
\end{equation*}
where $\pi_j[n]$, $n=1,\ldots,k$, $j \neq 0$, can be calculated from a recursive algorithm which is described in Section \ref{sec:proof}.
\end{theorem}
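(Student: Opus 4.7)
The plan is to obtain an explicit representation of $\pi_j^{(\varepsilon)}$ as a ratio of functionals evaluated at the characteristic root $\rho^{(\varepsilon)}$, and then push an asymptotic power-series expansion through each factor using the algebraic calculus of such series (sum, product, quotient, substitution). The first step, carried out in Section \ref{sec:quasistationary}, is to apply the discrete-time renewal theorem to the $\rho^{(\varepsilon)}$-tilted sequence $e^{\rho^{(\varepsilon)} n} \mathsf{P}_i\{\xi^{(\varepsilon)}(n)=s,\,\mu_0^{(\varepsilon)}>n\}$. The equation $\phi_{ii}^{(\varepsilon)}(\rho^{(\varepsilon)})=1$ provided by Lemma \ref{lmm:chareq} makes the underlying renewal measure proper; condition $\mathbf{D}$ (propagated to small $\varepsilon$ via $\mathbf{A}$) supplies non-periodicity; and $\mathbf{C}$ supplies the integrability needed for the theorem to apply. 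Computing the resulting renewal limit for both $\mathsf{P}_i\{\xi^{(\varepsilon)}(n)=j,\,\mu_0^{(\varepsilon)}>n\}$ and the corresponding denominator $\mathsf{P}_i\{\mu_0^{(\varepsilon)}>n\}$ should give a representation of the form
\begin{equation*}
 \pi_j^{(\varepsilon)} = \frac{\omega_{iij}^{(\varepsilon)}(\rho^{(\varepsilon)})}{\sum_{s \neq 0} \omega_{iis}^{(\varepsilon)}(\rho^{(\varepsilon)})},
\end{equation*}
with the right-hand side independent of the reference state $i \neq 0$ thanks to the communicating-class structure.

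The proof of the theorem then amounts to expanding each ingredient of this ratio in $\varepsilon$. First, using the first-passage decomposition on state $0$, the functionals $\phi_{ij}^{(\varepsilon)}(\rho^{(0)},r)$ and $\omega_{iis}^{(\varepsilon)}(\rho^{(0)},r)$ can be written as entries of matrix resolvents whose ingredients are polynomials in the quantities $p_{ij}^{(\varepsilon)}(\rho^{(0)},r)$; combining $\mathbf{P_{k+1}}$ with the operational rules for asymptotic series under matrix inversion (as in Petersson 2016) yields expansions of these functionals up to order $k+1-r$. Second, substituting a candidate expansion $\rho^{(\varepsilon)} = \rho^{(0)} + \rho[1]\varepsilon + \cdots + \rho[k+1]\varepsilon^{k+1} + o(\varepsilon^{k+1})$ into $\phi_{ii}^{(\varepsilon)}(\rho^{(\varepsilon)})=1$ and using the identity $e^{\rho^{(\varepsilon)} n} = e^{\rho^{(0)} n} \sum_{r \geq 0} (\rho^{(\varepsilon)}-\rho^{(0)})^r n^r/r!$ converts the characteristic equation into a relation whose left-hand side is a series in $\rho^{(\varepsilon)}-\rho^{(0)}$ with coefficients given by the expansions just obtained; matching powers of $\varepsilon$ determines the coefficients $\rho[n]$ recursively. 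Finally, the same argument-shift identity applied to each $\omega_{iis}^{(\varepsilon)}(\rho^{(\varepsilon)})$ produces its expansion up to order $k$ (one order is lost to the substitution of $\rho^{(\varepsilon)}$ for $\rho^{(0)}$), after which the quotient rule for asymptotic power series gives the claimed expansion of $\pi_j^{(\varepsilon)}$ together with a recursive algorithm for the coefficients $\pi_j[n]$ in terms of the input data $p_{ij}[\rho^{(0)},r,n]$.

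The main obstacle is the bookkeeping in the argument-shift step: substituting the expansion of $\rho^{(\varepsilon)}$ into the $e^{(\rho^{(\varepsilon)}-\rho^{(0)})n}$ factor mixes every order of that expansion with every higher-moment coefficient $\omega_{iis}^{(\varepsilon)}(\rho^{(0)},r)$, and one must verify that the tail contributions vanish uniformly at rate $o(\varepsilon^{k})$. This requires that $\rho^{(\varepsilon)}$ remain inside a disk on which condition $\mathbf{C}(\mathbf{a})$ guarantees uniformly bounded exponential moments, so that the remainders in the Taylor expansion of the exponential factor are genuinely negligible. Condition $\mathbf{C}(\mathbf{b})$ combined with Lemma \ref{lmm:chareq} ensures $\rho^{(0)} < \beta$, and the continuity $\rho^{(\varepsilon)} \to \rho^{(0)}$ then provides the necessary margin for small $\varepsilon$. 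Once this uniformity is in place, the remainder of the argument is a (lengthy but routine) unwinding of the formal power-series identities, and the recursive nature of the algorithm follows transparently from the recursive nature of the quotient rule.
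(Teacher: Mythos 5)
Your proposal is correct and follows essentially the same route as the paper: the renewal-theorem representation $\pi_j^{(\varepsilon)} = \omega_{iij}^{(\varepsilon)}(\rho^{(\varepsilon)}) / \sum_{s\neq 0}\omega_{iis}^{(\varepsilon)}(\rho^{(\varepsilon)})$ from Section \ref{sec:quasistationary}, expansions of $\phi_{ii}^{(\varepsilon)}(\rho^{(0)},r)$ and $\omega_{iij}^{(\varepsilon)}(\rho^{(0)},r)$ imported from Petersson (2016), the Taylor argument-shift $e^{\rho^{(\varepsilon)}n}=e^{\rho^{(0)}n}\sum_r (\Delta^{(\varepsilon)})^r n^r/r!$ applied first to the characteristic equation to obtain $\rho^{(\varepsilon)}$ (Lemma \ref{lmm:expansionrho}) and then to $\omega_{iij}^{(\varepsilon)}(\rho^{(\varepsilon)})$ (Lemma \ref{lmm:expansionomegarho}), and finally the quotient rule (Lemma \ref{lmm:expansionpi}). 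The only cosmetic deviation is in the order bookkeeping: you place the ``loss of one order'' at the substitution of $\rho^{(\varepsilon)}$ for $\rho^{(0)}$ and hence expand $\rho^{(\varepsilon)}$ to order $k+1$, whereas the paper already drops from order $k+1-r$ to $k-r$ when passing from $p_{ij}^{(\varepsilon)}(\rho^{(0)},r)$ to $\phi_{ii}^{(\varepsilon)}(\rho^{(0)},r)$ and $\omega_{iij}^{(\varepsilon)}(\rho^{(0)},r)$ and only expands $\rho^{(\varepsilon)}$ to order $k$, which suffices.
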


\section{Quasi-Stationary Distributions} \label{sec:quasistationary}

In this section we use renewal theory in order to get a formula for the quasi-stationary distribution.

The probabilities $P_{i j}^{(\varepsilon)}(n) = \mathsf{P}_i \{ \xi^{(\varepsilon)}(n) = j, \ \mu_0^{(\varepsilon)} > n \}$, $i,j \neq 0$, satisfy the following discrete time renewal equation,
\begin{equation} \label{eq:renewaleq}
 P_{i j}^{(\varepsilon)}(n) = h_{i j}^{(\varepsilon)}(n) + \sum_{k=0}^n P_{i j}^{(\varepsilon)}(n-k) g_{i i}^{(\varepsilon)}(k), \ n=0,1,\ldots,
\end{equation}
where
\begin{equation*}
 h_{i j}^{(\varepsilon)}(n) = \mathsf{P}_i \{ \xi^{(\varepsilon)}(n) = j, \ \mu_0^{(\varepsilon)} \wedge \mu_i^{(\varepsilon)} > n \}.
\end{equation*}
Since $\sum_{n=0}^\infty g_{i i}^{(\varepsilon)}(n) = g_{i i}^{(\varepsilon)} \leq 1$, relation \eqref{eq:renewaleq} defines a possibly improper renewal equation.

Let us now, for each $n=0,1,\ldots,$ multiply both sides of \eqref{eq:renewaleq} by $e^{\rho^{(\varepsilon)} n}$, where $\rho^{(\varepsilon)}$ is the root of the characteristic equation $\phi_{i i}^{(\varepsilon)}(\rho) = 1$. Then, we get
\begin{equation} \label{eq:properrenewaleq}
 \widetilde{P}_{i j}^{(\varepsilon)}(n) = \widetilde{h}_{i j}^{(\varepsilon)}(n) + \sum_{k=0}^n \widetilde{P}_{i j}^{(\varepsilon)}(n-k) \widetilde{g}_{i i}^{(\varepsilon)}(k), \ n=0,1,\ldots,
\end{equation}
where
\begin{equation*}
 \widetilde{P}_{i j}^{(\varepsilon)}(n) = e^{\rho^{(\varepsilon)} n} P_{i j}^{(\varepsilon)}(n), \ \widetilde{h}_{i j}^{(\varepsilon)}(n) = e^{\rho^{(\varepsilon)} n} h_{i j}^{(\varepsilon)}(n), \ \widetilde{g}_{i i}^{(\varepsilon)}(n) = e^{\rho^{(\varepsilon)} n} g_{i i}^{(\varepsilon)}(n).
\end{equation*}

By the definition of the root of the characteristic equation, relation \eqref{eq:properrenewaleq} defines a proper renewal equation.

In order to prove our next result, we first formulate an auxiliary lemma. A proof can be found in Petersson (2016).

\begin{lemma} \label{lmm:convergence}
Assume that conditions $\mathbf{A}$--$\mathbf{C}$ hold. Then there exists $\delta > \rho^{(0)}$ such that:
\begin{enumerate}
\item[$\mathbf{(i)}$] $\phi_{k j}^{(\varepsilon)}(\rho) \rightarrow \phi_{k j}^{(0)}(\rho) < \infty$, as $\varepsilon \rightarrow 0$, $\rho \leq \delta$, $k,j \neq 0$.
\item[$\mathbf{(ii)}$] $\omega_{k j s}^{(\varepsilon)}(\rho) \rightarrow \omega_{k j s}^{(0)}(\rho) < \infty$, as $\varepsilon \rightarrow 0$, $\rho \leq \delta$, $k,j,s \neq 0$.
\end{enumerate}
\end{lemma}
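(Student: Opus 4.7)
The aim is to show convergence of the moment generating functions $\phi_{kj}^{(\varepsilon)}(\rho)$ and $\omega_{kjs}^{(\varepsilon)}(\rho)$ to finite limits for all $\rho$ up to some threshold $\delta > \rho^{(0)}$. I would proceed in three steps: (i) pointwise convergence of the probability mass functions $g_{kj}^{(\varepsilon)}(n)$ and $h_{kjs}^{(\varepsilon)}(n)$ for each fixed $n$; (ii) a uniform-in-$\varepsilon$ finiteness bound on the exponential moments at some $\delta^{*} > \rho^{(0)}$; and (iii) a tail-truncation argument combining (i) and (ii).

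Step (i) follows from condition $\mathbf{A}$: for each fixed $n$, the probability $g_{kj}^{(\varepsilon)}(n)$ decomposes as a finite sum, over embedded-chain trajectories in $X \setminus \{0\}$ of length at most $n$ with transition-time increments summing to $n$, of finite products of factors $p_{ab}^{(\varepsilon)}$ and $f_{ab}^{(\varepsilon)}(\cdot)$. Each factor converges to its $\varepsilon=0$ analogue by $\mathbf{A}$, giving $g_{kj}^{(\varepsilon)}(n) \to g_{kj}^{(0)}(n)$, and the analogous path decomposition yields $h_{kjs}^{(\varepsilon)}(n) \to h_{kjs}^{(0)}(n)$.

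Step (ii) is the main technical obstacle. Decomposing $g_{kj}^{(\varepsilon)}(n)$ by the number of embedded steps before first reaching $j$ while avoiding $0$, one obtains a matrix identity expressing $\phi_{kj}^{(\varepsilon)}(\rho)$ via the resolvent $(I - T_{j}^{(\varepsilon)}(\rho))^{-1}$, where $T_{j}^{(\varepsilon)}(\rho)$ has entries $p_{ab}^{(\varepsilon)}(\rho)$ for $a,b \in X \setminus \{0,j\}$. By $\mathbf{C}(a)$, the entries of the corresponding untabooed matrix $T^{(\varepsilon)}(\rho)$ on $X \setminus \{0\}$ are uniformly bounded in $\varepsilon$ for $\rho \leq \beta$. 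From Lemma \ref{lmm:chareq} together with the first-passage identity $1/(1 - \phi_{ii}^{(0)}(\rho)) = \sum_{m \geq 0} [T^{(0)}(\rho)^{m}]_{ii}$, the spectral radius of $T^{(0)}(\rho)$ equals $1$ precisely at $\rho = \rho^{(0)}$; condition $\mathbf{B}$ gives irreducibility of the embedded dynamics on $X \setminus \{0\}$, so by Perron--Frobenius the spectral radius of the principal submatrix $T_{j}^{(0)}(\rho^{(0)})$ is \emph{strictly} less than $1$. Joint continuity in $(\rho,\varepsilon)$ of the matrix entries, and hence of the spectral radius, then yields $\delta^{*} \in (\rho^{(0)}, \beta]$ and $\varepsilon_{0} > 0$ such that the resolvent is uniformly bounded for $\rho \leq \delta^{*}$ and $\varepsilon \in [0,\varepsilon_{0}]$. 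An analogous resolvent representation, with the target being the observed state rather than a first-passage state, handles $\omega_{kjs}^{(\varepsilon)}$.

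For (iii), fix any $\delta \in (\rho^{(0)}, \delta^{*})$ and any $\rho \leq \delta$. The tail estimate
\[
 \sum_{n > M} e^{\rho n} g_{kj}^{(\varepsilon)}(n) \leq e^{(\rho - \delta^{*})(M+1)} \phi_{kj}^{(\varepsilon)}(\delta^{*})
\]
is uniformly small in $\varepsilon$ as $M \to \infty$ by the uniform bound from step (ii), while the truncated sum $\sum_{n \leq M} e^{\rho n} g_{kj}^{(\varepsilon)}(n)$ converges as $\varepsilon \to 0$ by step (i). Combining gives $\phi_{kj}^{(\varepsilon)}(\rho) \to \phi_{kj}^{(0)}(\rho) < \infty$, and the same argument applied with $\omega_{kjs}^{(\varepsilon)}$ and $h_{kjs}^{(\varepsilon)}(n)$ in place of $\phi_{kj}^{(\varepsilon)}$ and $g_{kj}^{(\varepsilon)}(n)$ proves part (ii) of the lemma. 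The principal difficulty is the strict spectral-radius inequality and its joint continuity in $(\rho,\varepsilon)$, which relies essentially on conditions $\mathbf{B}$, $\mathbf{C}(a)$, and $\mathbf{C}(b)$ together.
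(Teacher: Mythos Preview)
The paper does not prove this lemma; immediately before stating it, the paper says ``A proof can be found in Petersson (2016).'' There is therefore no in-paper argument to compare your proposal against.

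On its own merits, your three-step scheme---pointwise convergence via condition~$\mathbf{A}$, a uniform exponential-moment bound obtained from a resolvent/Perron--Frobenius analysis of the taboo transition matrix, and a tail-truncation argument combining the two---is a sound and standard route to results of this kind. The only place that warrants extra care is the claim that the spectral radius of $T^{(0)}(\rho^{(0)})$ equals~$1$: the renewal identity you quote gives the lower bound $\geq 1$ (since $\phi_{ii}^{(0)}(\rho^{(0)})=1$ forces the Neumann series to diverge), but the upper bound needs a short continuity argument. For $\rho<\rho^{(0)}$ one has $\phi_{ii}^{(0)}(\rho)<1$, so the Neumann series converges and the spectral radius is $<1$; the matrix entries $p_{ab}^{(0)}(\rho)$ remain finite and continuous up to $\rho=\beta$ by~$\mathbf{C}(a)$ (and $\beta>\rho^{(0)}$ follows from~$\mathbf{C}(b)$ together with strict monotonicity of $\phi_{ii}^{(0)}$), so letting $\rho\uparrow\rho^{(0)}$ gives spectral radius $\leq 1$. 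Once that is secured, the strict Perron--Frobenius inequality for proper principal submatrices of an irreducible non-negative matrix (irreducibility on $X\setminus\{0\}$ coming from~$\mathbf{B}$) and joint continuity of the spectral radius in $(\rho,\varepsilon)$ complete step~(ii) as you describe, and step~(iii) is routine.
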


We can now use the classical discrete time renewal theorem in order to get a formula for the quasi-stationary distribution.

\begin{lemma} \label{lmm:qsd}
Assume that conditions $\mathbf{A}$--$\mathbf{D}$ hold. Then:
\begin{itemize}
\item[$\mathbf{(i)}$] For sufficiently small $\varepsilon$, the quasi stationary distribution $\pi_j^{(\varepsilon)}$, given by relation \eqref{eq:qsddef}, have the following representation,
\begin{equation} \label{eq:qsdformula}
 \pi_j^{(\varepsilon)} = \frac{\omega_{i i j}^{(\varepsilon)}(\rho^{(\varepsilon)})}{\omega_{i i 1}^{(\varepsilon)}(\rho^{(\varepsilon)}) + \cdots + \omega_{i i N}^{(\varepsilon)}(\rho^{(\varepsilon)})}, \ i,j \neq 0.
\end{equation}
\item[$\mathbf{(ii)}$] For $j=1,\ldots,N$, we have
\begin{equation*}
 \pi_j^{(\varepsilon)} \rightarrow \pi_j^{(0)}, \ \text{as} \ \varepsilon \rightarrow 0.
\end{equation*}
\end{itemize}
\end{lemma}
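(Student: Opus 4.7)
The plan is to prove part (i) via the discrete-time renewal theorem applied to the proper equation \eqref{eq:properrenewaleq}, and then get part (ii) from the joint continuity of $\omega_{iij}^{(\varepsilon)}(\rho)$ in $(\varepsilon,\rho)$ combined with the convergence $\rho^{(\varepsilon)}\to\rho^{(0)}$ from Lemma \ref{lmm:chareq}.

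For part (i), first I would check the hypotheses of the discrete renewal theorem for \eqref{eq:properrenewaleq}. By the very definition of the root, $\sum_n \widetilde{g}_{ii}^{(\varepsilon)}(n)=\phi_{ii}^{(\varepsilon)}(\rho^{(\varepsilon)})=1$, so $\widetilde{g}_{ii}^{(\varepsilon)}$ is a proper probability distribution; its mean is $\phi_{ii}^{(\varepsilon)}(\rho^{(\varepsilon)},1)$, which is finite for sufficiently small $\varepsilon$ since $\rho^{(\varepsilon)}<\beta$ and condition $\mathbf{C}$ together with Lemma \ref{lmm:convergence} give finiteness on a neighborhood of $\rho^{(0)}$. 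Non-periodicity of $\widetilde{g}_{ii}^{(\varepsilon)}$ follows from condition $\mathbf{D}$ (plus the fact that non-periodicity is preserved under the exponential tilt and transfers from $i$ to any other state in the communicating class $\{1,\ldots,N\}$, using the arguments given in Petersson (2016)). Finally, $\sum_n \widetilde{h}_{ij}^{(\varepsilon)}(n)=\omega_{iij}^{(\varepsilon)}(\rho^{(\varepsilon)})<\infty$ by Lemma \ref{lmm:convergence}, so the forcing function is summable. Applying the renewal theorem yields
\begin{equation*}
 \lim_{n\to\infty} \widetilde{P}_{ij}^{(\varepsilon)}(n) = \frac{\omega_{iij}^{(\varepsilon)}(\rho^{(\varepsilon)})}{\phi_{ii}^{(\varepsilon)}(\rho^{(\varepsilon)},1)}, \ j \neq 0.
\end{equation*}
Since $\mathsf{P}_i\{\xi^{(\varepsilon)}(n)=j\mid \mu_0^{(\varepsilon)}>n\}=P_{ij}^{(\varepsilon)}(n)/\sum_{k=1}^N P_{ik}^{(\varepsilon)}(n)$, multiplying numerator and denominator by $e^{\rho^{(\varepsilon)}n}$ and passing to the limit makes the factor $1/\phi_{ii}^{(\varepsilon)}(\rho^{(\varepsilon)},1)$ cancel, yielding the representation \eqref{eq:qsdformula}. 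Independence of $i$ then follows because the left-hand side of \eqref{eq:qsddef} does not depend on $i$.

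For part (ii), it suffices to show that $\omega_{iij}^{(\varepsilon)}(\rho^{(\varepsilon)}) \to \omega_{iij}^{(0)}(\rho^{(0)})$ as $\varepsilon\to 0$, after which (ii) follows from (i) by summing and dividing (the denominator stays bounded away from zero thanks to condition $\mathbf{B}$ and Lemma \ref{lmm:chareq}). I would split
\begin{equation*}
 \omega_{iij}^{(\varepsilon)}(\rho^{(\varepsilon)}) - \omega_{iij}^{(0)}(\rho^{(0)}) = \bigl[\omega_{iij}^{(\varepsilon)}(\rho^{(\varepsilon)}) - \omega_{iij}^{(\varepsilon)}(\rho^{(0)})\bigr] + \bigl[\omega_{iij}^{(\varepsilon)}(\rho^{(0)}) - \omega_{iij}^{(0)}(\rho^{(0)})\bigr].
\end{equation*}
The second bracket tends to zero by Lemma \ref{lmm:convergence}(ii). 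For the first bracket, fix $\rho^{(0)}<\delta'<\delta$; then for $\varepsilon$ small we have $\rho^{(\varepsilon)}<\delta'$, and the mean value theorem applied termwise gives
\begin{equation*}
 \bigl|\omega_{iij}^{(\varepsilon)}(\rho^{(\varepsilon)}) - \omega_{iij}^{(\varepsilon)}(\rho^{(0)})\bigr| \leq |\rho^{(\varepsilon)}-\rho^{(0)}|\sum_{n=0}^\infty n e^{\delta' n} h_{iij}^{(\varepsilon)}(n) = |\rho^{(\varepsilon)}-\rho^{(0)}|\,\omega_{iij}^{(\varepsilon)}(\delta',1).
\end{equation*}
The key technical step is to control $\omega_{iij}^{(\varepsilon)}(\delta',1)$ uniformly in $\varepsilon$, which I would obtain by choosing $\delta'<\delta''<\delta$ and dominating $n e^{\delta' n}\leq Ce^{\delta'' n}$, reducing the quantity to $C\omega_{iij}^{(\varepsilon)}(\delta'')$, which is uniformly bounded by Lemma \ref{lmm:convergence}(ii). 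Combined with $\rho^{(\varepsilon)}\to\rho^{(0)}$ from Lemma \ref{lmm:chareq}, the first bracket vanishes.

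I expect the main obstacle to be the joint continuity argument in part (ii): the pointwise convergence in Lemma \ref{lmm:convergence} is at a fixed $\rho$, while here the evaluation point $\rho^{(\varepsilon)}$ drifts. Establishing the uniform bound on $\omega_{iij}^{(\varepsilon)}$ slightly beyond $\rho^{(0)}$ (so that the mean-value estimate closes) is the delicate point, and it relies on having reserved a margin $\beta>\rho^{(0)}$ in condition $\mathbf{C}$. The renewal theorem application in (i) is more routine, provided one carefully transfers non-periodicity from the state in condition $\mathbf{D}$ to an arbitrary $i\neq 0$.
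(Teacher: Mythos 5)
Your part (i) argument is essentially the paper's: verify the hypotheses of the discrete renewal theorem (non-periodicity of the tilted distribution, finite mean, summable forcing function) and then cancel the common factor $\phi_{ii}^{(\varepsilon)}(\rho^{(\varepsilon)},1)$ in the ratio. One small inaccuracy: the margin for the mean and summability estimates comes from the $\delta>\rho^{(0)}$ of Lemma~\ref{lmm:convergence}, not directly from the $\beta$ in condition $\mathbf{C}$; the paper fixes $\rho^{(0)}<\gamma<\delta$ and uses $\sup_n n e^{-(\delta-\gamma)n}$.

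For part (ii) you take a genuinely different route. The paper proves $\omega_{iij}^{(\varepsilon)}(\rho^{(\varepsilon)})\to\omega_{iij}^{(0)}(\rho^{(0)})$ by a uniform tail truncation: it shows $\limsup_{\varepsilon\to 0}\sum_{n\geq N}e^{\rho^{(\varepsilon)}n}h_{ij}^{(\varepsilon)}(n)\leq e^{-(\delta-\gamma)N}\omega_{iij}^{(0)}(\delta)\to 0$ as $N\to\infty$, and then passes to the limit termwise on a finite initial block using $\rho^{(\varepsilon)}\to\rho^{(0)}$ and $h_{ij}^{(\varepsilon)}(n)\to h_{ij}^{(0)}(n)$. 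You instead split $\omega_{iij}^{(\varepsilon)}(\rho^{(\varepsilon)})-\omega_{iij}^{(0)}(\rho^{(0)})$ into a drift-in-$\rho$ part controlled by a termwise mean value estimate $\lvert\rho^{(\varepsilon)}-\rho^{(0)}\rvert\cdot\omega_{iij}^{(\varepsilon)}(\delta',1)$ (dominated by a uniformly bounded $\omega_{iij}^{(\varepsilon)}(\delta'')$ for $\delta'<\delta''<\delta$) and a drift-in-$\varepsilon$ part handled directly by Lemma~\ref{lmm:convergence}(ii). Both arguments exploit exactly the margin $\rho^{(0)}<\delta$ and exactly the uniform bound supplied by Lemma~\ref{lmm:convergence}, so they are of comparable strength; the paper's truncation-plus-pointwise scheme avoids introducing first-moment functionals $\omega_{iij}^{(\varepsilon)}(\cdot,1)$, whereas your Lipschitz-in-$\rho$ estimate makes the dependence on $\rho^{(\varepsilon)}-\rho^{(0)}$ quantitative. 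Both are correct; yours is a valid and arguably more transparent variant.
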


\begin{proof}
Under condition $\mathbf{D}$, the functions $g_{i i}^{(0)}(n)$ are non-periodic for all $i \neq 0$. By Lemma \ref{lmm:convergence} we have that $\phi_{i i}^{(\varepsilon)}(\rho) \rightarrow \phi_{i i}^{(0)}(\rho)$ as $\varepsilon \rightarrow 0$, for $\rho \leq \delta$, $i \neq 0$. From this it follows that $g_{i i}^{(\varepsilon)}(n) \rightarrow g_{i i}^{(0)}(n)$ as $\varepsilon \rightarrow 0$, for $n \geq 0$, $i \neq 0$. Thus, we can conclude that there exists $\varepsilon_1 > 0$ such that the functions $\widetilde{g}_{i i}^{(\varepsilon)}(n)$, $i \neq 0$, are non-periodic for all $\varepsilon \leq \varepsilon_1$.

Now choose $\gamma$ such that $\rho^{(0)} < \gamma < \delta$. Using Lemmas \ref{lmm:chareq} and \ref{lmm:convergence}, we get the following for all $i \neq 0$,
\begin{equation*}
\begin{split}
 \limsup_{0 \leq \varepsilon \rightarrow 0} \sum_{n=0}^\infty n \widetilde{g}_{i i}^{(\varepsilon)}(n) &\leq \limsup_{0 \leq \varepsilon \rightarrow 0} \sum_{n=0}^\infty n e^{\gamma n} g_{i i}^{(\varepsilon)}(n) \\
 &\leq \left( \sup_{n \geq 0} n e^{-(\delta - \gamma)n} \right) \phi_{i i}^{(0)}(\delta) < \infty.
\end{split}
\end{equation*}
Thus, there exists $\varepsilon_2 > 0$ such that the distributions $\widetilde{g}_{i i}^{(\varepsilon)}(n)$, $i \neq 0$, have finite mean for all $\varepsilon \leq \varepsilon_2$.

Furthermore, it follows from Lemmas \ref{lmm:chareq} and \ref{lmm:convergence} that, for all $i,j \neq 0$,
\begin{equation*}
 \limsup_{0 \leq \varepsilon \rightarrow 0} \sum_{n=0}^\infty \widetilde{h}_{i j}^{(\varepsilon)}(n) \leq \limsup_{0 \leq \varepsilon \rightarrow 0} \sum_{n=0}^\infty e^{\delta n} h_{i j}^{(\varepsilon)}(n) = \omega_{i i j}^{(0)}(\delta) < \infty,
\end{equation*}
so there exists $\varepsilon_3 > 0$ such that $\sum_{n=0}^\infty \widetilde{h}_{i j}^{(\varepsilon)}(n) < \infty$, $i,j \neq 0$, for all $\varepsilon \leq \varepsilon_3$.

Now, let $\varepsilon_0 = \min \{ \varepsilon_1, \varepsilon_2, \varepsilon_3 \}$. For all $\varepsilon \leq \varepsilon_0$, the assumptions of the discrete time renewal theorem are satisfied for the renewal equation defined by \eqref{eq:properrenewaleq}. This yields
\begin{equation} \label{eq:renewaltheorem}
 \widetilde{P}_{i j}^{(\varepsilon)}(n) \rightarrow \frac{\sum_{k=0}^\infty \widetilde{h}_{i j}^{(\varepsilon)}(k)}{\sum_{k=0}^\infty k \widetilde{g}_{i i}^{(\varepsilon)}(k)}, \ \text{as} \ n \rightarrow \infty, \ i,j \neq 0, \ \varepsilon \leq \varepsilon_0.
\end{equation}

Note that we have
\begin{equation} \label{eq:condprob}
 \mathsf{P}_i \{ \xi^{(\varepsilon)}(n) = j \, | \, \mu_0^{(\varepsilon)} > n \} = \frac{\widetilde{P}_{i j}^{(\varepsilon)}(n)}{\sum_{k=1}^N \widetilde{P}_{i k}^{(\varepsilon)}(n)}, \ n = 0,1,\ldots, \ i,j \neq 0.
\end{equation}

It follows from \eqref{eq:renewaltheorem} and \eqref{eq:condprob} that, for $\varepsilon \leq \varepsilon_0$,
\begin{equation*}
 \mathsf{P}_i \{ \xi^{(\varepsilon)}(n) = j \, | \, \mu_0^{(\varepsilon)} > n \} \rightarrow \frac{\omega_{i i j}^{(\varepsilon)}(\rho^{(\varepsilon)})}{\sum_{k=1}^N \omega_{i i k}^{(\varepsilon)}(\rho^{(\varepsilon)})}, \ \text{as} \ n \rightarrow \infty, \ i,j \neq 0.
\end{equation*}

This proves part $\mathbf{(i)}$.

For the proof of part $\mathbf{(ii)}$, first note that,
\begin{equation} \label{eq:tailestimate}
\begin{split}
 0 &\leq \limsup_{0 \leq \varepsilon \rightarrow 0} \sum_{n=N}^\infty e^{\rho^{(\varepsilon)} n} h_{i j}^{(\varepsilon)}(n) \\
 &\leq \limsup_{0 \leq \varepsilon \rightarrow 0} \sum_{n=N}^\infty e^{\gamma n} h_{i j}^{(\varepsilon)}(n) \\
 &\leq e^{-(\delta - \gamma)N} \omega_{i i j}^{(0)}(\delta) < \infty, \ N=1,2,\ldots, \ i,j \neq 0.
\end{split}
\end{equation}

Relation \eqref{eq:tailestimate} implies that
\begin{equation} \label{eq:convtail}
 \lim_{N \rightarrow \infty} \limsup_{0 \leq \varepsilon \rightarrow 0} \sum_{n=N}^\infty e^{\rho^{(\varepsilon)} n} h_{i j}^{(\varepsilon)}(n) = 0, \ i,j \neq 0.
\end{equation}

It follows from Lemma \ref{lmm:chareq} that
\begin{equation} \label{eq:convrho}
 \rho^{(\varepsilon)} \rightarrow \rho^{(0)}, \ \text{as} \ \varepsilon \rightarrow 0.
\end{equation}

Since $h_{i j}^{(\varepsilon)}(n)$, for each $n=0,1,\ldots,$ can be written as a finite sum where each term in the sum is a continuous function of the quantities given in condition $\mathbf{A}$, we have
\begin{equation} \label{eq:convh}
 h_{i j}^{(\varepsilon)}(n) \rightarrow h_{i j}^{(0)}(n), \ \text{as} \ \varepsilon \rightarrow 0, \ i,j \neq 0.
\end{equation}

It now follows from \eqref{eq:convtail}, \eqref{eq:convrho}, and \eqref{eq:convh} that
\begin{equation} \label{eq:convomegarho}
 \omega_{i i j}^{(\varepsilon)}(\rho^{(\varepsilon)}) \rightarrow \omega_{i i j}^{(0)}(\rho^{(0)}), \ \text{as} \ \varepsilon \rightarrow 0, \ i,j \neq 0.
\end{equation}

Relations \eqref{eq:qsdformula} and \eqref{eq:convomegarho} show that part $\mathbf{(ii)}$ of Lemma \ref{lmm:qsd} holds.
\end{proof}

\section{Proof of the Main Result} \label{sec:proof}

In this section we prove Theorem \ref{thm:mainresult}.

Throughout this section, it is assumed that conditions $\mathbf{A}$--$\mathbf{D}$ and $\mathbf{P_{k+1}}$ hold.

The proof is given in a sequence of lemmas. For the proof of the first lemma, we refer to Petersson (2016).

\begin{lemma}
For $r=0,\ldots,k$ and $i,j \neq 0$, we have the following asymptotic expansions,
\begin{equation} \label{eq:expansionomegarho0}
 \omega_{i i j}^{(\varepsilon)}(\rho^{(0)},r) = a_{i j}[r,0] + a_{i j}[r,1] \varepsilon + \cdots + a_{i j}[r,k-r] \varepsilon^{k-r} + o(\varepsilon^{k-r}),
\end{equation}
\begin{equation} \label{eq:expansionphirho0}
 \phi_{i i}^{(\varepsilon)}(\rho^{(0)},r) = b_i[r,0] + b_i[r,1] \varepsilon + \cdots + b_i[r,k-r] \varepsilon^{k-r} + o(\varepsilon^{k-r}),
\end{equation}
where the coefficients in these expansions can be calculated from lemmas and theorems given in Petersson (2016).
\end{lemma}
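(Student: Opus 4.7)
The plan is to derive the two expansions from first-passage decompositions that express $\phi_{ii}^{(\varepsilon)}(\rho^{(0)},r)$ and $\omega_{iij}^{(\varepsilon)}(\rho^{(0)},r)$ as solutions of linear systems whose coefficients are the power-exponential moments $p_{il}^{(\varepsilon)}(\rho^{(0)},r')$ of the one-step transition laws, and then to transfer the expansions from those coefficients, supplied by $\mathbf{P_{k+1}}$, to the unknowns by the standard algebra of asymptotic expansions.

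First I would set up first-passage equations by conditioning on the first step of the embedded chain. For $i,j \neq 0$,
\begin{equation*}
 g_{i j}^{(\varepsilon)}(n) = Q_{i j}^{(\varepsilon)}(n) + \sum_{l \neq 0, j} \sum_{k=0}^n Q_{i l}^{(\varepsilon)}(k)\, g_{l j}^{(\varepsilon)}(n-k),
\end{equation*}
together with an analogous first-step decomposition for the taboo probabilities $h_{i j s}^{(\varepsilon)}(n)$. Multiplying by $n^r e^{\rho^{(0)} n}$, summing over $n$, and using $(k+m)^r = \sum_{r'=0}^{r}\binom{r}{r'} k^{r'} m^{r-r'}$ to handle the convolution, the renewal identity becomes
\begin{equation*}
 \phi_{i j}^{(\varepsilon)}(\rho^{(0)},r) = p_{i j}^{(\varepsilon)}(\rho^{(0)},r) + \sum_{l \neq 0, j}\sum_{r'=0}^{r} \binom{r}{r'} p_{i l}^{(\varepsilon)}(\rho^{(0)},r')\, \phi_{l j}^{(\varepsilon)}(\rho^{(0)},r-r'),
\end{equation*}
with an analogous relation for $\omega$. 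For each fixed $r$ this is a linear system in the vector $(\phi_{l j}^{(\varepsilon)}(\rho^{(0)},r))_{l \neq 0, j}$ whose matrix is $I$ minus the $j$-taboo moment matrix $[p_{i l}^{(\varepsilon)}(\rho^{(0)})]_{i,l \neq 0, j}$, and whose inhomogeneous part depends only on indices with strictly smaller $r' < r$.

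The next step is induction on $r$. At $r=0$ the limit matrix $I - [p_{i l}^{(0)}(\rho^{(0)})]_{i,l \neq 0, j}$ has to be invertible; under $\mathbf{A}$--$\mathbf{C}$, the root $\rho^{(0)}$ makes the Perron eigenvalue of the full moment matrix equal to $1$, while removing the row and column of $j$ strictly decreases the spectral radius. Condition $\mathbf{P_{k+1}}$ supplies expansions of $p_{i l}^{(\varepsilon)}(\rho^{(0)},r')$ of order $k+1-r'$, and since sums, products, and inverses of expandable matrix-valued functions are again expandable (with a well-known one-step loss in the inversion lemma), the $r=0$ system produces an expansion of $\phi_{l j}^{(\varepsilon)}(\rho^{(0)},0)$ of order $k$. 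Raising $r$ by one consumes one order of the input per step, delivering the claimed order $k-r$ in \eqref{eq:expansionphirho0}; feeding the result into the parallel system for $h_{i j s}^{(\varepsilon)}(n)$ yields \eqref{eq:expansionomegarho0} by the same mechanism. The coefficients $a_{ij}[r,n]$ and $b_i[r,n]$ are then read off by equating powers of $\varepsilon^n$ in the recursions.

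The main obstacle I would anticipate is not any single identity but the careful bookkeeping of the one-order loss per $r$-step combined with uniform invertibility of the coefficient matrices across $\varepsilon \in [0,\varepsilon_0]$. Both issues are standard and are packaged once and for all in the general perturbation lemmas for linear systems with expandable coefficients developed in Petersson (2016), which is exactly why the present statement is quoted from there rather than proved afresh.
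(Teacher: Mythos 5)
The paper does not prove this lemma; it is quoted from Petersson (2016). Your general architecture --- conditioning on the first embedded jump to obtain linear systems for the moment functionals in the transform domain, then propagating the expansions supplied by $\mathbf{P_{k+1}}$ through those systems by the algebra of asymptotic power series with an induction on $r$ --- is the right one and is essentially what the cited reference does, so on that level you are on track.

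There is, however, a concrete flaw in your account of where one order of expansion is lost. Inverting a matrix whose entries have asymptotic power series and whose leading term is invertible loses \emph{nothing}: if $A(\varepsilon)=A_0+A_1\varepsilon+\cdots+A_m\varepsilon^m+o(\varepsilon^m)$ with $A_0$ invertible, then $A(\varepsilon)^{-1}$ has an expansion of the same order $m$. The ``well-known one-step loss in the inversion lemma'' you invoke is not a real phenomenon, and if the $\phi$-system and the $\omega$-system were as structurally parallel as you describe, $\mathbf{P_{k+1}}$ would deliver order $k+1-r$ for both functionals and hence an order-$(k+1)$ expansion of the quasi-stationary distribution --- contradicting the formulation of Theorem \ref{thm:mainresult} and the explicit remark in Section \ref{sec:markovchains} that $\mathbf{P_{k+1}}$ (rather than $\mathbf{P_k}$) is required precisely for the $\omega$-expansions in the semi-Markov case. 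The asymmetry you are missing lies in the inhomogeneous term. The first-step decomposition of $h_{ijs}^{(\varepsilon)}(n)$ is
\begin{equation*}
 h_{i j s}^{(\varepsilon)}(n) = \chi(i=s)\, \mathsf{P}\{ \kappa_1^{(\varepsilon)} > n \mid \eta_0^{(\varepsilon)} = i \} + \sum_{l \neq 0, j} \sum_{m=1}^n Q_{i l}^{(\varepsilon)}(m)\, h_{l j s}^{(\varepsilon)}(n-m),
\end{equation*}
so the free term is the \emph{tail} of the sojourn-time law, not $Q_{ij}^{(\varepsilon)}(n)$ as in the $g$-equation. Interchanging the order of summation, its transform $\sum_n n^r e^{\rho^{(0)} n}\,\mathsf{P}\{\kappa_1^{(\varepsilon)}>n\mid\eta_0^{(\varepsilon)}=i\}$ pairs $Q_{il}^{(\varepsilon)}(m)$ with the partial sum $\sum_{n=0}^{m-1} n^r e^{\rho^{(0)} n}$, which absorbs an extra power of $m$ (for $\rho^{(0)}=0$ it is a degree-$(r+1)$ polynomial in $m$); so this term is controlled by the functionals $p_{il}^{(\varepsilon)}(\rho^{(0)},r')$ with $r'$ running up to $r+1$, which under $\mathbf{P_{k+1}}$ only carry order $k-r$. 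This is the genuine bottleneck, and the Markov-chain discussion in Section \ref{sec:markovchains} (sojourn times identically $1$, so the tail term is $\varepsilon$-independent) corroborates it. Your sketch, as written, flattens the $\omega$-system onto the $\phi$-system and would lead you to write the wrong free term and to misplace the source of the order loss.
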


Let us now recall from Section \ref{sec:quasistationary} that the quasi-stationary distribution, for sufficiently small $\varepsilon$, has the following representation,
\begin{equation} \label{eq:qsdformula2}
 \pi_j^{(\varepsilon)} = \frac{\omega_{i i j}^{(\varepsilon)}(\rho^{(\varepsilon)})}{\omega_{i i 1}^{(\varepsilon)}(\rho^{(\varepsilon)}) + \cdots + \omega_{i i N}^{(\varepsilon)}(\rho^{(\varepsilon)})}, \ j=1,\ldots,N.
\end{equation}

The construction of the asymptotic expansion for the quasi-stationary distribution will be realized in three steps. First, we use the coefficients in the expansions given by \eqref{eq:expansionphirho0} to build an asymptotic expansion for $\rho^{(\varepsilon)}$, the root of the characteristic equation. Then, the coefficients in this expansion and the coefficients in the expansions given by \eqref{eq:expansionomegarho0} are used to construct asymptotic expansions for $\omega_{i i j}^{(\varepsilon)}(\rho^{(\varepsilon)})$. Finally, relation \eqref{eq:qsdformula2} is used to complete the proof.

We formulate these steps in the following three lemmas. Let us here remark that the proof of Lemma \ref{lmm:expansionrho} is given in Silvestrov and Petersson (2013) in the context of general discrete time renewal equations and the proofs of Lemmas \ref{lmm:expansionomegarho} and \ref{lmm:expansionpi} are given in Petersson (2014b) in the context of quasi-stationary distributions for discrete time regenerative processes. In order to make the paper more self-contained, we also give the proofs here, in slightly reduced forms.

\begin{lemma} \label{lmm:expansionrho}
The root of the characteristic equation has the following asymptotic expansion,
\begin{equation*}
 \rho^{(\varepsilon)} = \rho^{(0)} + c_1 \varepsilon + \cdots + c_k \varepsilon^k + o(\varepsilon^k),
\end{equation*}
where $c_1 = -b_i[0,1] / b_i[1,0]$ and, for $n=2,\ldots,k,$
\begin{equation*}
\begin{split}
 c_n = - \frac{1}{b_i[1,0]} &\Bigg( b_i[0,n] + \sum_{q=1}^{n-1} b_i[1,n-q] c_q \\
 &+ \sum_{m=2}^n \sum_{q=m}^n b_i[m,n-q] \cdot \sum_{n_1,\ldots,n_{q-1} \in D_{m,q}} \prod_{p=1}^{q-1} \frac{c_p^{n_p}}{n_p!} \Bigg),
\end{split}
\end{equation*}
where $D_{m,q}$ is the set of all non-negative integer solutions of the system
\begin{equation*}
 n_1 + \cdots + n_{q-1} = m, \quad n_1 + 2 n_2 + \cdots + (q-1)n_{q-1} = q.
\end{equation*}
\end{lemma}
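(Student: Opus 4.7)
The heart of the matter is the implicit equation $\phi_{ii}^{(\varepsilon)}(\rho^{(\varepsilon)}) = 1$. Under conditions $\mathbf{A}$--$\mathbf{C}$ and Lemma \ref{lmm:convergence}, the map $\rho \mapsto \phi_{ii}^{(\varepsilon)}(\rho)$ is analytic on $\{\rho \le \delta\}$ uniformly in small $\varepsilon$, and its derivatives in $\rho$ are precisely $\partial_\rho^m \phi_{ii}^{(\varepsilon)}(\rho) = \phi_{ii}^{(\varepsilon)}(\rho,m)$. My plan is to Taylor expand around $\rho^{(0)}$ to order $k$, substitute the ansatz $\rho^{(\varepsilon)} - \rho^{(0)} = c_1\varepsilon + \cdots + c_k\varepsilon^k + o(\varepsilon^k)$, and match powers of $\varepsilon$. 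Using $\phi_{ii}^{(0)}(\rho^{(0)}) = 1$, i.e.\ $b_i[0,0] = 1$, the identity $\phi_{ii}^{(\varepsilon)}(\rho^{(\varepsilon)}) = 1$ rearranges to
\begin{equation*}
 0 = \bigl(\phi_{ii}^{(\varepsilon)}(\rho^{(0)}) - 1\bigr) + \sum_{m=1}^{k} \frac{(\rho^{(\varepsilon)} - \rho^{(0)})^m}{m!}\,\phi_{ii}^{(\varepsilon)}(\rho^{(0)},m) + O\bigl((\rho^{(\varepsilon)}-\rho^{(0)})^{k+1}\bigr).
\end{equation*}

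Next I would argue by induction on $n = 1, \ldots, k$, with base case $\rho^{(\varepsilon)} \to \rho^{(0)}$ supplied by Lemma \ref{lmm:chareq}. Given the expansion up to order $n-1$ with coefficients $c_1, \ldots, c_{n-1}$ already determined, the multinomial theorem gives
\begin{equation*}
 \frac{(c_1\varepsilon + c_2\varepsilon^2 + \cdots)^m}{m!} = \sum_{q \geq m} \varepsilon^q \sum_{(n_1,\ldots,n_{q-1}) \in D_{m,q}} \prod_{p=1}^{q-1} \frac{c_p^{n_p}}{n_p!},
\end{equation*}
where for $m \geq 2$ the constraints $\sum n_p = m$ and $\sum p\,n_p = q$ force every active index to satisfy $p \le q-1$, so $D_{m,q}$ is used exactly as defined. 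Multiplying by the expansion \eqref{eq:expansionphirho0} of $\phi_{ii}^{(\varepsilon)}(\rho^{(0)},m)$ and collecting the coefficient of $\varepsilon^n$, the unknown $c_n$ appears only once, in the $(m=1,\,q=n)$ contribution as $c_n \cdot b_i[1,0]$. All other contributions involve only $c_1, \ldots, c_{n-1}$ and the known quantities $b_i[m,\cdot]$. Setting the coefficient to zero and solving yields precisely the recursion in the statement; the case $n=1$ reduces to $c_1 b_i[1,0] = -b_i[0,1]$, matching the formula for $c_1$.

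Two facts underpin the step from matching to solving. First, $b_i[1,0] = \sum_{n \geq 1} n\, e^{\rho^{(0)} n}\, g_{ii}^{(0)}(n)$ is strictly positive (by $\mathbf{B}$ together with $g_{ii}^{(0)}(0) = 0$, so the exponentially weighted mean is positive) and finite (by Lemma \ref{lmm:convergence}, since $\rho^{(0)} < \delta$), hence division by $b_i[1,0]$ is legitimate. Second, the $o(\varepsilon^n)$ remainder at stage $n$ must be verified: it comes from the Taylor remainder $O((\rho^{(\varepsilon)}-\rho^{(0)})^{k+1}) = O(\varepsilon^{k+1})$ together with the $o(\varepsilon^{k-m})$ tails in \eqref{eq:expansionphirho0} multiplied by factors of order $\varepsilon^{m}$. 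The main obstacle I foresee is keeping this bookkeeping clean across the nested sums so that all error terms uniformly collapse to $o(\varepsilon^n)$; once this is handled, the triangular structure of the resulting linear system in $c_n$ makes the recursion essentially automatic.
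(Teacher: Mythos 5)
Your proposal follows essentially the same route as the paper: a finite Taylor expansion of the characteristic function in $\rho$ about $\rho^{(0)}$ with a controlled remainder, an induction that establishes the existence of the expansion order by order (showing the successive remainders $\Delta_{n-1}^{(\varepsilon)}/\varepsilon^{n}$ converge), and formal matching of powers of $\varepsilon$ via the multinomial identity to extract the recursion, with division justified by $b_i[1,0]>0$. The argument is correct and no essential step is missing.
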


\begin{proof}
Let $\Delta^{(\varepsilon)} = \rho^{(\varepsilon)} - \rho^{(0)}$. It follows from the Taylor expansion of the exponential function that, for $n=0,1,\ldots,$
\begin{equation} \label{eq:lmmexpansionrho1}
 e^{\rho^{(\varepsilon)} n} = e^{\rho^{(0)} n} \left( \sum_{r=0}^k \frac{(\Delta^{(\varepsilon)})^r n^r}{r!} + \frac{(\Delta^{(\varepsilon)})^{k+1} n^{k+1}}{(k+1)!} e^{|\Delta^{(\varepsilon)}| n} \zeta_{k+1}^{(\varepsilon)}(n) \right),
\end{equation}
where $0 \leq \zeta_{k+1}^{(\varepsilon)}(n) \leq 1$.

If we multiply both sides of \eqref{eq:lmmexpansionrho1} by $g_{i i}^{(\varepsilon)}(n)$, sum over all $n$, and use that $\rho^{(\varepsilon)}$ is the root of the characteristic equation, we get
\begin{equation} \label{eq:lmmexpansionrho2}
 1 = \sum_{r=0}^k \frac{(\Delta^{(\varepsilon)})^r}{r!} \phi_{i i}^{(\varepsilon)}(\rho^{(0)},r) + (\Delta^{(\varepsilon)})^{k+1} M_{k+1}^{(\varepsilon)},
\end{equation}
where
\begin{equation} \label{eq:lmmexpansionrho3}
 M_{k+1}^{(\varepsilon)} = \frac{1}{(k+1)!} \sum_{n=0}^\infty n^{k+1} e^{(\rho^{(0)} + |\Delta^{(\varepsilon)}|) n} \zeta_{k+1}^{(\varepsilon)}(n) g_{i i}^{(\varepsilon)}(n).
\end{equation}

Let $\delta > \rho^{(0)}$ be the value from Lemma \ref{lmm:convergence}. It follows from Lemma \ref{lmm:chareq} that $|\Delta^{(\varepsilon)}| \rightarrow 0$ as $\varepsilon \rightarrow 0$, so there exist $\beta > 0$ and $\varepsilon_1(\beta) > 0$ such that
\begin{equation} \label{eq:lmmexpansionrho4}
 \rho^{(0)} + |\Delta^{(\varepsilon)}| \leq \beta < \delta, \ \varepsilon \leq \varepsilon_1(\beta).
\end{equation}

Since $\beta < \delta$, Lemma \ref{lmm:convergence} implies that there exists $\varepsilon_2(\beta) > 0$ such that
\begin{equation} \label{eq:lmmexpansionrho5}
 \phi_{i i}^{(\varepsilon)}(\beta,r) < \infty, \ r=0,1,\ldots, \ \varepsilon \leq \varepsilon_2(\beta).
\end{equation}

Let $\varepsilon_0 = \varepsilon_0(\beta) = \min \{ \varepsilon_1(\beta), \varepsilon_2(\beta) \}$. Then, relations \eqref{eq:lmmexpansionrho3}, \eqref{eq:lmmexpansionrho4}, and \eqref{eq:lmmexpansionrho5} imply that
\begin{equation} \label{eq:lmmexpansionrho6}
 M_{k+1}^{(\varepsilon)} \leq \frac{1}{(k+1)!} \phi_{i i}^{(\varepsilon)}(\beta,k+1) < \infty, \ \varepsilon \leq \varepsilon_0.
\end{equation}

It follows from \eqref{eq:lmmexpansionrho6} that we can rewrite \eqref{eq:lmmexpansionrho2} as
\begin{equation} \label{eq:lmmexpansionrho7}
 1 = \sum_{r=0}^k \frac{(\Delta^{(\varepsilon)})^r}{r!} \phi_{i i}^{(\varepsilon)}(\rho^{(0)},r) + (\Delta^{(\varepsilon)})^{k+1} M_{k+1} \zeta_{k+1}^{(\varepsilon)},
\end{equation}
where $M_{k+1} = \sup_{\varepsilon \leq \varepsilon_0} M_{k+1}^{(\varepsilon)} < \infty$ and $0 \leq \zeta_{k+1}^{(\varepsilon)} \leq 1$.

From relation \eqref{eq:lmmexpansionrho7} we can successively construct the asymptotic expansion for the root of the characteristic equation.

Let us first assume that $k=1$. In this case \eqref{eq:lmmexpansionrho7} implies that
\begin{equation} \label{eq:lmmexpansionrho8}
 1 = \phi_{i i}^{(\varepsilon)}(\rho^{(0)},0) + \Delta^{(\varepsilon)} \phi_{i i}^{(\varepsilon)}(\rho^{(0)},1) + (\Delta^{(\varepsilon)})^2 O(1).
\end{equation}

Using \eqref{eq:expansionphirho0}, \eqref{eq:lmmexpansionrho8}, and that $\Delta^{(\varepsilon)} \rightarrow 0$ as $\varepsilon \rightarrow 0$, it follows that
\begin{equation} \label{eq:lmmexpansionrho9}
 - b_i[0,1] \varepsilon = \Delta^{(\varepsilon)} (b_i[1,0] + o(1)) + o(\varepsilon).
\end{equation}

Dividing both sides of equation \eqref{eq:lmmexpansionrho9} by $\varepsilon$ and letting $\varepsilon$ tend to zero, we can conclude that $\Delta^{(\varepsilon)} / \varepsilon \rightarrow -b_i[0,1] / b_i[1,0]$ as $\varepsilon \rightarrow 0$. From this it follows that we have the representation
\begin{equation} \label{eq:lmmexpansionrho10}
 \Delta^{(\varepsilon)} = c_1 \varepsilon + \Delta_1^{(\varepsilon)},
\end{equation}
where $c_1 = -b_i[0,1] / b_i[1,0]$ and $\Delta_1^{(\varepsilon)} / \varepsilon \rightarrow 0$ as $\varepsilon \rightarrow 0$.

This proves Lemma \ref{lmm:expansionrho} for the case $k=1$.

Let us now assume that $k=2$. In this case relation \eqref{eq:lmmexpansionrho7} implies that
\begin{equation} \label{eq:lmmexpansionrho11}
 1 = \phi_{i i}^{(\varepsilon)}(\rho^{(0)},0) + \Delta^{(\varepsilon)} \phi_{i i}^{(\varepsilon)}(\rho^{(0)},1) + \frac{(\Delta^{(\varepsilon)})^2}{2} \phi_{i i}^{(\varepsilon)}(\rho^{(0)},2) + (\Delta^{(\varepsilon)})^3 O(1).
\end{equation}

Using \eqref{eq:expansionphirho0} and \eqref{eq:lmmexpansionrho10} in relation \eqref{eq:lmmexpansionrho11} and rearranging gives
\begin{equation} \label{eq:lmmexpansionrho12}
 - \left( b_i[0,2] + b_i[1,1] c_1 + \frac{b_i[2,0] c_1^2}{2} \right) \varepsilon^2 = \Delta_1^{(\varepsilon)} ( b_i[1,0] + o(1) ) + o(\varepsilon^2).
\end{equation}

Dividing both sides of equation \eqref{eq:lmmexpansionrho12} by $\varepsilon^2$ and letting $\varepsilon$ tend to zero, we can conclude that $\Delta_1^{(\varepsilon)} / \varepsilon^2 \rightarrow c_2$ as $\varepsilon \rightarrow 0$, where
\begin{equation*}
 c_2 = - \frac{1}{b_i[1,0]} \left( b_i[0,2] + b_i[1,1] c_1 + \frac{b_i[2,0] c_1^2}{2} \right).
\end{equation*}

From this and \eqref{eq:lmmexpansionrho10} it follows that we have the representation
\begin{equation*}
 \Delta^{(\varepsilon)} = c_1 \varepsilon + c_2 \varepsilon^2 + \Delta_2^{(\varepsilon)},
\end{equation*}
where $\Delta_2^{(\varepsilon)} / \varepsilon^2 \rightarrow 0$ as $\varepsilon \rightarrow 0$.

This proves Lemma \ref{lmm:expansionrho} for the case $k=2$.

Continuing in this way we can prove the lemma for any positive integer $k$. However, once it is known that the expansion exists, the coefficients can be obtained in a simpler way. From \eqref{eq:expansionphirho0} and \eqref{eq:lmmexpansionrho7} we get the following formal equation,
\begin{equation} \label{eq:lmmexpansionrho13}
\begin{split}
 &- (b_i[0,1] \varepsilon + b_i[0,2] \varepsilon^2 + \cdots ) \\
 &\quad \quad = ( c_1 \varepsilon + c_2 \varepsilon^2 + \cdots ) ( b_i[1,0] + b_i[1,1] \varepsilon + \cdots ) \\
 &\quad \quad + (1/2!)( c_1 \varepsilon + c_2 \varepsilon^2 + \cdots )^2 ( b_i[2,0] + b_i[2,1] \varepsilon + \cdots ) + \cdots
\end{split}
\end{equation}
By expanding the right hand side of \eqref{eq:lmmexpansionrho13} and then equating coefficients of equal powers of $\varepsilon$ in the left and right hand sides, we obtain the formulas given in Lemma \ref{lmm:expansionrho}.
\end{proof}

\begin{lemma} \label{lmm:expansionomegarho}
For any $i,j \neq 0$, we have the following asymptotic expansion,
\begin{equation*}
 \omega_{i i j}^{(\varepsilon)}(\rho^{(\varepsilon)}) = \omega_{i i j}^{(0)}(\rho^{(0)}) + d_{i j}[1] \varepsilon + \cdots + d_{i j}[k] \varepsilon^k + o(\varepsilon^k),
\end{equation*}
where $d_{i j}[1] = a_{i j}[0,1] + a_{i j}[1,0] c_1$, and, for $n=2,\ldots,k$,
\begin{equation*}
\begin{split}
 d_{i j}[n] &= a_{i j}[0,n] + \sum_{q=1}^n a_{i j}[1,n-q] c_q \\
 &+ \sum_{m=2}^n \sum_{q=m}^n a_{i j}[m,n-q] \cdot \sum_{n_1,\ldots,n_{q-1} \in D_{m,q}} \prod_{p=1}^{q-1} \frac{c_p^{n_p}}{n_p!},
\end{split}
\end{equation*}
where $D_{m,q}$ is the set of all non-negative integer solutions of the system
\begin{equation*}
 n_1 + \cdots + n_{q-1} = m, \quad n_1 + 2 n_2 + \cdots + (q-1)n_{q-1} = q.
\end{equation*}
\end{lemma}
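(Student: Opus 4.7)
The plan is to run the Taylor-expansion technique employed in Lemma \ref{lmm:expansionrho}, but now applied to the linear functional $\omega_{iij}^{(\varepsilon)}(\cdot)$ rather than to $\phi_{ii}^{(\varepsilon)}(\cdot)$. Setting $\Delta^{(\varepsilon)} = \rho^{(\varepsilon)} - \rho^{(0)}$ and using the Lagrange-remainder identity \eqref{eq:lmmexpansionrho1}, I multiply by $h_{iij}^{(\varepsilon)}(n)$ and sum over $n \geq 0$ to obtain the decomposition
\begin{equation*}
\omega_{iij}^{(\varepsilon)}(\rho^{(\varepsilon)}) = \sum_{r=0}^{k}\frac{(\Delta^{(\varepsilon)})^r}{r!}\,\omega_{iij}^{(\varepsilon)}(\rho^{(0)},r) + (\Delta^{(\varepsilon)})^{k+1} L_{k+1}^{(\varepsilon)},
\end{equation*}
where $L_{k+1}^{(\varepsilon)}$ is the analogue of $M_{k+1}^{(\varepsilon)}$ from \eqref{eq:lmmexpansionrho3} with $g_{ii}^{(\varepsilon)}(n)$ replaced by $h_{iij}^{(\varepsilon)}(n)$.

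The remainder is controlled exactly as in the proof of Lemma \ref{lmm:expansionrho}. By Lemma \ref{lmm:chareq} we may fix $\beta$ with $\rho^{(0)}+|\Delta^{(\varepsilon)}| \leq \beta < \delta$ for sufficiently small $\varepsilon$, and then the elementary bound $n^{k+1} e^{\beta n} \leq C_\beta e^{\delta n}$, with $C_\beta = \sup_{n \geq 0} n^{k+1} e^{-(\delta - \beta) n} < \infty$, combined with Lemma \ref{lmm:convergence}(ii) yields $L_{k+1}^{(\varepsilon)} \leq C_\beta \omega_{iij}^{(\varepsilon)}(\delta)/(k+1)!$, which is uniformly bounded in small $\varepsilon$. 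Since $(\Delta^{(\varepsilon)})^{k+1} = O(\varepsilon^{k+1})$ by Lemma \ref{lmm:expansionrho}, the remainder contributes $o(\varepsilon^k)$ and may be discarded.

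With the remainder handled, the rest is bookkeeping. I substitute the expansions $\Delta^{(\varepsilon)} = c_1\varepsilon + \cdots + c_k\varepsilon^k + o(\varepsilon^k)$ and \eqref{eq:expansionomegarho0} into the truncated sum and expand $(\Delta^{(\varepsilon)})^m/m!$ by the multinomial theorem: its $\varepsilon^q$-coefficient (for $q \geq m$) equals exactly $\sum_{(n_1,\ldots,n_{q-1}) \in D_{m,q}} \prod_{p=1}^{q-1} c_p^{n_p}/n_p!$. Pairing this with $a_{ij}[m, n-q]$ from \eqref{eq:expansionomegarho0} and summing over $m,q$ with $m \leq q \leq n$ (handling $m=0$ and $m=1$ separately, as in the statement) reproduces the stated formula for $d_{ij}[n]$. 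Following the strategy at the end of the proof of Lemma \ref{lmm:expansionrho}, once existence of the expansion has been established by induction on the order $n$, the coefficients may be extracted directly by equating powers of $\varepsilon$ in the resulting formal identity.

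The only analytic point is the uniform bound on $L_{k+1}^{(\varepsilon)}$, which I expect to be the main obstacle; however, it is essentially a transcription of the argument \eqref{eq:lmmexpansionrho3}--\eqref{eq:lmmexpansionrho6} with $\phi$ replaced by $\omega$, invoking Lemma \ref{lmm:convergence}(ii) in place of condition $\mathbf{C}(a)$. The combinatorial extraction of the coefficients is a direct analogue of the corresponding step in the proof of Lemma \ref{lmm:expansionrho}.
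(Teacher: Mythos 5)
Your proposal is correct and follows essentially the same route as the paper: multiply the Taylor--Lagrange identity for $e^{\rho^{(\varepsilon)}n}$ by $h_{i j}^{(\varepsilon)}(n)$, sum over $n$, bound the remainder uniformly via Lemma \ref{lmm:convergence}(ii) exactly as in the proof of Lemma \ref{lmm:expansionrho}, and then substitute the expansions of $\Delta^{(\varepsilon)}$ and \eqref{eq:expansionomegarho0} and collect powers of $\varepsilon$. The multinomial bookkeeping you describe for the $\varepsilon^q$-coefficient of $(\Delta^{(\varepsilon)})^m/m!$ is precisely how the stated formula for $d_{i j}[n]$ arises.
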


\begin{proof}
Let us again use relation \eqref{eq:lmmexpansionrho1} given in the proof of Lemma \ref{lmm:expansionrho}. Multiplying both sides of \eqref{eq:lmmexpansionrho1} by $h_{i j}^{(\varepsilon)}(n)$ and summing over all $n$ we get
\begin{equation} \label{eq:lmmexpansionomegarho1}
 \omega_{i i j}^{(\varepsilon)}(\rho^{(\varepsilon)}) = \sum_{r=0}^k \frac{(\Delta^{(\varepsilon)})^r}{r!} \omega_{i i j}^{(\varepsilon)}(\rho^{(0)},r) + (\Delta^{(\varepsilon)})^{k+1} \widetilde{M}_{k+1}^{(\varepsilon)},
\end{equation}
where
\begin{equation*}
 \widetilde{M}_{k+1}^{(\varepsilon)} = \frac{1}{(k+1)!} \sum_{n=0}^\infty n^{k+1} e^{(\rho^{(0)} + |\Delta^{(\varepsilon)}|) n} \zeta_{k+1}^{(\varepsilon)}(n) h_{i j}^{(\varepsilon)}(n).
\end{equation*}

Using similar arguments as in the proof of Lemma \ref{lmm:expansionrho} we can rewrite \eqref{eq:lmmexpansionomegarho1} as
\begin{equation} \label{eq:lmmexpansionomegarho2}
 \omega_{i i j}^{(\varepsilon)}(\rho^{(\varepsilon)}) = \sum_{r=0}^k \frac{(\Delta^{(\varepsilon)})^r}{r!} \omega_{i i j}^{(\varepsilon)}(\rho^{(0)},r) + (\Delta^{(\varepsilon)})^{k+1} \widetilde{M}_{k+1} \zeta_{k+1}^{(\varepsilon)},
\end{equation}
where $\widetilde{M}_{k+1} = \sup_{\varepsilon \leq \varepsilon_0} \widetilde{M}_{k+1}^{(\varepsilon)} < \infty$, for some $\varepsilon_0 > 0$, and $0 \leq \zeta_{k+1}^{(\varepsilon)} \leq 1$.

From Lemma \ref{lmm:expansionrho} we have the following asymptotic expansion,
\begin{equation} \label{eq:lmmexpansionomegarho3}
 \Delta^{(\varepsilon)} = c_1 \varepsilon + \cdots + c_k \varepsilon^k + o(\varepsilon^k).
\end{equation}

Substituting the expansions \eqref{eq:expansionomegarho0} and \eqref{eq:lmmexpansionomegarho3} into relation \eqref{eq:lmmexpansionomegarho2} yields
\begin{equation} \label{eq:lmmexpansionomegarho4}
\begin{split}
 \omega_{i i j}^{(\varepsilon)}(\rho^{(\varepsilon)}) &= \omega_{i i j}^{(0)}(\rho^{(0)}) + a_{i j}[0,1] \varepsilon + \cdots + a_{i j}[0,k] \varepsilon^k + o(\varepsilon^k) \\
 &+ ( c_1 \varepsilon + \cdots + c_k \varepsilon^k + o(\varepsilon^k) ) \\
 &\times ( a_{i j}[1,0] + a_{i j}[1,1] \varepsilon + \cdots + a_{i j}[1,k-1] \varepsilon^{k-1} + o(\varepsilon^{k-1}) ) \\
 &+ \cdots + \\
 &+ (1/k!)( c_1 \varepsilon + \cdots + c_k \varepsilon^k + o(\varepsilon^k) )^k ( a_{i j}[k,0] + o(1) ).
\end{split}
\end{equation}
By expanding the right hand side of \eqref{eq:lmmexpansionomegarho4} and grouping coefficients of equal powers of $\varepsilon$ we get the expansions and formulas given in Lemma \ref{lmm:expansionomegarho}.
\end{proof}

\begin{lemma} \label{lmm:expansionpi}
For any $j \neq 0$, we have the following asymptotic expansion,
\begin{equation} \label{eq:expansionpi}
 \pi_j^{(\varepsilon)} = \pi_j^{(0)} + \pi_j[1] \varepsilon + \cdots + \pi_j[k] \varepsilon^k + o(\varepsilon^k).
\end{equation}
The coefficients $\pi_j[n]$, $n=1,\ldots,k$, $j \neq 0$, are for any $i \neq 0$ given by the following recursive formulas,
\begin{equation*}
 \pi_j[n] = \frac{1}{e_i[0]} \left( d_{i j}[n] - \sum_{q=0}^{n-1} e_i[n-q] \pi_j[q] \right), \ n=1,\ldots,k,
\end{equation*}
where $\pi_j[0] = \pi_j^{(0)}$, $d_{i j}[0] = \omega_{i i j}^{(0)}(\rho^{(0)})$, and $e_i[n] = \sum_{j \neq 0} d_{i j}[n]$, $n=0,\ldots,k$.
\end{lemma}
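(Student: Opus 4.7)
The plan is to read the asymptotic expansion of $\pi_j^{(\varepsilon)}$ off the representation \eqref{eq:qsdformula2}, treating its right-hand side as a ratio of two asymptotic series in $\varepsilon$ and dividing them formally. The inputs are the expansions of the numerators $\omega_{iij}^{(\varepsilon)}(\rho^{(\varepsilon)})$ already produced by Lemma \ref{lmm:expansionomegarho} together with the convergence supplied by Lemma \ref{lmm:qsd}(ii).

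First I would fix $i \neq 0$ and sum the expansion of Lemma \ref{lmm:expansionomegarho} over $s = 1,\ldots,N$, producing
\[
 \sum_{s=1}^N \omega_{iis}^{(\varepsilon)}(\rho^{(\varepsilon)}) = e_i[0] + e_i[1]\varepsilon + \cdots + e_i[k]\varepsilon^k + o(\varepsilon^k),
\]
with $e_i[n]$ as in the statement. The leading term is bounded below by $\omega_{iii}^{(0)}(\rho^{(0)}) \geq h_{iii}^{(0)}(0) = 1$, so $e_i[0] > 0$ and the denominator in \eqref{eq:qsdformula2} is bounded away from zero near $\varepsilon = 0$. Multiplying \eqref{eq:qsdformula2} through by this denominator gives the identity
\[
 \pi_j^{(\varepsilon)} \sum_{s=1}^N \omega_{iis}^{(\varepsilon)}(\rho^{(\varepsilon)}) = \omega_{iij}^{(\varepsilon)}(\rho^{(\varepsilon)}),
\]
which I would use as the central relation to extract the coefficients $\pi_j[n]$.

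Next I would prove, by induction on $n = 0,1,\ldots,k$, that $\pi_j^{(\varepsilon)}$ admits an expansion to order $n$ with coefficients determined by the stated recursion. The base case $n = 0$ is Lemma \ref{lmm:qsd}(ii), which forces $\pi_j[0] = \pi_j^{(0)} = d_{ij}[0]/e_i[0]$. For the inductive step I would substitute $\pi_j^{(\varepsilon)} = \pi_j[0] + \cdots + \pi_j[n-1]\varepsilon^{n-1} + r_n^{(\varepsilon)}$, with $r_n^{(\varepsilon)} = o(\varepsilon^{n-1})$, into the identity, use the expansions of both sides from Lemma \ref{lmm:expansionomegarho}, and cancel the matching coefficients of $\varepsilon^0,\ldots,\varepsilon^{n-1}$ (which agree by the inductive hypothesis). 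After dividing what remains by $\varepsilon^n$ and sending $\varepsilon \to 0$, the ratio $r_n^{(\varepsilon)}/\varepsilon^n$ has a finite limit, and that limit is precisely the $\pi_j[n]$ prescribed in the statement. Once existence is established this way, the recursion is simply the consequence of equating the coefficients of $\varepsilon^n$ in the identity above: the right side contributes $d_{ij}[n]$, the left side contributes $\sum_{q=0}^n e_i[n-q]\pi_j[q]$, and isolating $\pi_j[n]$ using $e_i[0] > 0$ yields the formula.

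The main obstacle is the rigorous handling of the $o(\varepsilon^k)$ remainders when one multiplies the tentative expansion of $\pi_j^{(\varepsilon)}$ by the denominator expansion; the inductive scheme above is designed precisely so that this bookkeeping reduces to the elementary algebra of power-series multiplication together with the bound $e_i[0] \geq 1$. A small secondary point is that the coefficients $\pi_j[n]$ produced by the recursion a priori depend on the choice of $i \neq 0$, but this dependence is spurious: the left-hand side $\pi_j^{(\varepsilon)}$ of \eqref{eq:qsdformula2} does not depend on $i$, so neither can its asymptotic coefficients.
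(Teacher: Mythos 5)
Your proposal follows essentially the same route as the paper: write the quasi-stationary distribution as a ratio of the two asymptotic series supplied by Lemma \ref{lmm:expansionomegarho}, observe $e_i[0] > 0$, clear the denominator to obtain the product identity, and match coefficients of like powers of $\varepsilon$ to get the recursion. The paper is terser (it simply asserts that the expansion exists once $e_i[0] > 0$ and then equates coefficients), whereas you make the existence explicit by induction and supply the lower bound $e_i[0] \geq \omega_{iii}^{(0)}(\rho^{(0)}) \geq 1$; these are correct elaborations of the same argument rather than a different method.
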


\begin{proof}
It follows from formula \eqref{eq:qsdformula2} and Lemma \ref{lmm:expansionomegarho} that we for all $i,j \neq 0$ have
\begin{equation} \label{eq:lmmexpansionpi1}
 \pi_j^{(\varepsilon)} = \frac{d_{i j}[0] + d_{i j}[1] \varepsilon + \cdots + d_{i j}[k] \varepsilon^k + o(\varepsilon^k)}{e_i[0] + e_i[1] \varepsilon + \cdots + e_i[k] \varepsilon^k + o(\varepsilon^k)}.
\end{equation}

Since $e_i[0] > 0$, it follows from \eqref{eq:lmmexpansionpi1} that the expansion \eqref{eq:expansionpi} exists. From this and \eqref{eq:lmmexpansionpi1} we get the following equation,
\begin{equation} \label{eq:lmmexpansionpi2}
\begin{split}
 &( e_i[0] + e_i[1] \varepsilon + \cdots + e_i[k] \varepsilon^k + o(\varepsilon^k) ) \\
 &\quad \quad \times ( \pi_j[0] + \pi_j[1] \varepsilon + \cdots + \pi_j[k] \varepsilon^k + o(\varepsilon^k) ) \\
 &\quad \quad = d_{i j}[0] + d_{i j}[1] \varepsilon + \cdots + d_{i j}[k] \varepsilon^k + o(\varepsilon^k).
\end{split}
\end{equation}
By expanding the left hand side of \eqref{eq:lmmexpansionpi2} and then equating coefficients of equal powers of $\varepsilon$ in the left and right hand sides, we obtain the coefficients given in Lemma \ref{lmm:expansionpi}.
\end{proof}

\section{Perturbed Markov Chains} \label{sec:markovchains}

In this section it is shown how the results of the present paper can be applied in the special case of perturbed discrete time Markov chains. As an illustration, we present a simple numerical example.

For every $\varepsilon \geq 0$, let $\eta_n^{(\varepsilon)}$, $n=0,1,\ldots,$ be a homogeneous discrete time Markov chain with state space $X = \{0,1,\ldots,N\}$, an initial distribution $p_i^{(\varepsilon)} = \mathsf{P} \{ \eta_0^{(\varepsilon)} = i \}$, $i \in X$, and transition probabilities
\begin{equation*}
 p_{i j}^{(\varepsilon)} = \mathsf{P} \{ \eta_{n+1}^{(\varepsilon)} = j \, | \, \eta_n^{(\varepsilon)} = i \}, \ i,j \in X.
\end{equation*}

This model is a particular case of a semi-Markov process. In this case, the transition probabilities are given by
\begin{equation*}
 Q_{i j}^{(\varepsilon)}(n) = p_{i j}^{(\varepsilon)} \chi( n=1 ), \ n=1,2,\ldots, \ i,j \in X.
\end{equation*}

Furthermore, mixed power-exponential moment functionals for transition probabilities take the following form,
\begin{equation} \label{eq:defpmc}
 p_{i j}^{(\varepsilon)}(\rho,r) = \sum_{n=0}^\infty n^r e^{\rho n} Q_{i j}^{(\varepsilon)}(n) = e^{\rho} p_{i j}^{(\varepsilon)}, \ \rho \in \mathbb{R}, \ r=0,1,\ldots, \ i,j \in X.
\end{equation}

Conditions $\mathbf{A}$--$\mathbf{D}$ and $\mathbf{P_k}$ imposed in Section \ref{sec:mainresult} now hold if the following conditions are satisfied:

\begin{enumerate}
\item[$\mathbf{A'}$:] $g_{i j}^{(0)} > 0$, $i,j \neq 0$. 
\end{enumerate}

\begin{enumerate}
\item[$\mathbf{B'}$:] $g_{i i}^{(0)}(n)$ is non-periodic for some $i \neq 0$.
\end{enumerate}

\begin{enumerate}
\item[$\mathbf{P_k'}$:] $p_{i j}^{(\varepsilon)} = p_{i j}^{(0)} + p_{i j}[1] \varepsilon + \cdots + p_{i j}[k] \varepsilon^k + o(\varepsilon^k)$, $i,j \neq 0$, where $|p_{i j}[n]| < \infty$, $n=1,\ldots,k$, $i,j \neq 0$.
\end{enumerate}

Let us here remark that in order to construct an asymptotic expansion of order $k$ for the quasi-stationary distribution of a Markov chain, it is sufficient to assume that the perturbation condition holds for the parameter $k$, and not for $k+1$ as needed for semi-Markov processes. The stronger perturbation condition with parameter $k+1$ is needed in order to construct the asymptotic expansions given in equation \eqref{eq:expansionomegarho0}. However, for Markov chains these expansions can be constructed under the weaker perturbation condition. This follows from results given in Petersson (2016).

It follows from \eqref{eq:defpmc} and $\mathbf{P_k'}$ that the coefficients in the perturbation condition $\mathbf{P_k}$ are given by
\begin{equation} \label{eq:coeffrelation}
 p_{i j}[\rho^{(0)},r,n] = e^{\rho^{(0)}} p_{i j}[n], \ r=0,\ldots,k, \ n=0,\ldots,k-r, \ i,j \neq 0.
\end{equation}

Let us illustrate the remarks made above by means of a simple numerical example where we compute the asymptotic expansion of second order for the quasi-stationary distribution of a Markov chain with four states. We consider the simplest case where transitions to state $0$ is not possible for the limiting Markov chain. In this case, exact computations can be made and we can focus on the algorithm itself and need not need to consider possible numerical issues.

We consider a perturbed Markov chain $\eta_n^{(\varepsilon)}$, $n=0,1,\ldots,$ on the state space $X = \{ 0,1,2,3 \}$ with a matrix of transition probabilities given by
\begin{equation} \label{eq:transitionprobmc}
 \| p_{i j}^{(\varepsilon)} \| =
 \begin{bmatrix}
  1 & 0 & 0 & 0 \\
  1-e^{-\varepsilon} & 0 & e^{-\varepsilon} & 0 \\
  1-e^{-\varepsilon} & 0 & 0 & e^{-\varepsilon} \\
  1-e^{-2 \varepsilon} & \frac{1}{2} e^{-2 \varepsilon} & \frac{1}{2} e^{-2 \varepsilon} & 0
 \end{bmatrix}
 , \ \varepsilon \geq 0.
\end{equation}

First, using the well known asymptotic expansion for the exponential function, we obtain the coefficients in condition $\mathbf{P_k'}$. The non-zero coefficients in this condition take the following numerical values,
\begin{equation} \label{eq:coeffp}
\begin{array}{l l l l}
 p_{1 2}[0] = 1, & p_{2 3}[0] = 1, & p_{3 1}[0] = 1/2, & p_{3 2}[0] = 1/2, \\
 p_{1 2}[1] = -1, & p_{2 3}[1] = -1, & p_{3 1}[1] = -1, & p_{3 2}[1] = -1, \\
 p_{1 2}[2] = 1/2, & p_{2 3}[2] = 1/2, & p_{3 1}[2] = 1, & p_{3 2}[2] = 1.
\end{array}
\end{equation}

Then, the root of the characteristic equation for the limiting Markov chain needs to be found. Since $\phi_{i i}^{(0)}(0)  = \mathsf{P}_i \{ \nu_0^{(0)} > \nu_i^{(0)} \} = 1$, we have $\rho^{(0)} = 0$. In the case where transitions to state $0$ is possible also for the limiting Markov chain, the root $\rho^{(0)}$ needs to be computed numerically.

Now, using that $\rho^{(0)} = 0$ and relations \eqref{eq:coeffrelation} and \eqref{eq:coeffp}, we obtain the coefficients in condition $\mathbf{P_k}$.

Next step is to determine the coefficients in the expansions given in equations \eqref{eq:expansionomegarho0} and \eqref{eq:expansionphirho0} for the case where $k=2$ and $i$ is some fixed state which we can choose arbitrarily. Let us choose $i=1$. In order to compute these coefficients we apply the results given in Petersson (2016). According to these results, we can, based on the coefficients in condition $\mathbf{P_k}$, compute the following asymptotic vector expansions,
\begin{equation} \label{eq:expansionphimc}
\begin{split}
 &\mathsf{\Phi}_1^{(\varepsilon)}(0,0) = \mathsf{\Phi}_1[0,0,0] + \mathsf{\Phi}_1[0,0,1] \varepsilon + \mathsf{\Phi}_1[0,0,2] \varepsilon^2 + \mathbf{o}(\varepsilon^2), \\
 &\mathsf{\Phi}_1^{(\varepsilon)}(0,1) = \mathsf{\Phi}_1[0,1,0] + \mathsf{\Phi}_1[0,1,1] \varepsilon + \mathbf{o}(\varepsilon), \\
 &\mathsf{\Phi}_1^{(\varepsilon)}(0,2) = \mathsf{\Phi}_1[0,2,0] + \mathbf{o}(1),
\end{split}
\end{equation}
and, for $j=1,2,3$,
\begin{equation} \label{eq:expansionomegamc}
\begin{split}
 &\boldsymbol{\omega}_{1 j}^{(\varepsilon)}(0,0) = \boldsymbol{\omega}_{1 j}[0,0,0] + \boldsymbol{\omega}_{1 j}[0,0,1] \varepsilon + \boldsymbol{\omega}_{1 j}[0,0,2] \varepsilon^2 + \mathbf{o}(\varepsilon^2), \\
 &\boldsymbol{\omega}_{1 j}^{(\varepsilon)}(0,1) = \boldsymbol{\omega}_{1 j}[0,1,0] + \boldsymbol{\omega}_{1 j}[0,1,1] \varepsilon + \mathbf{o}(\varepsilon), \\
 &\boldsymbol{\omega}_{1 j}^{(\varepsilon)}(0,2) = \boldsymbol{\omega}_{1 j}[0,2,0] + \mathbf{o}(1),
\end{split}
\end{equation}
where
\begin{equation*}
 \mathsf{\Phi}_1^{(\varepsilon)}(0,r) =
 \begin{bmatrix}
  \phi_{1 1}^{(\varepsilon)}(0,r) & \phi_{2 1}^{(\varepsilon)}(0,r) & \phi_{3 1}^{(\varepsilon)}(0,r)
 \end{bmatrix}
 ^T, \ r=0,1,2,
\end{equation*}
and
\begin{equation*}
 \boldsymbol{\omega}_{1 j}^{(\varepsilon)}(0,r) =
 \begin{bmatrix}
  \omega_{1 1 j}^{(\varepsilon)}(0,r) & \omega_{2 1 j}^{(\varepsilon)}(0,r) & \omega_{3 1 j}^{(\varepsilon)}(0,r)
 \end{bmatrix}
 ^T, \ r=0,1,2, \ j=1,2,3.
\end{equation*}

For example, the coefficients in \eqref{eq:expansionphimc} take the following numerical values,
\begin{equation} \label{eq:coeffphi}
\begin{array}{l l l}
 \mathsf{\Phi}_1[0,0,0] =
 \begin{bmatrix}
  1 \\ 1 \\ 1
 \end{bmatrix}
 , &
 \mathsf{\Phi}_1[0,0,1] =
 \begin{bmatrix}
  -7 \\ -6 \\ -5
 \end{bmatrix}
 , &
 \mathsf{\Phi}_1[0,0,2] =
 \begin{bmatrix}
  67/2 \\ 27 \\ 43/2
 \end{bmatrix}
 , \\ \\
 \mathsf{\Phi}_1[0,1,0] =
 \begin{bmatrix}
  5 \\ 4 \\ 3
 \end{bmatrix}
 , &
 \mathsf{\Phi}_1[0,1,1] =
 \begin{bmatrix}
  -47 \\ -36 \\ -27
 \end{bmatrix}
 , &
 \mathsf{\Phi}_1[0,2,0] =
 \begin{bmatrix}
  33 \\ 24 \\ 17
 \end{bmatrix}
 .
\end{array}
\end{equation}

In particular, from \eqref{eq:expansionphimc} and \eqref{eq:expansionomegamc} we can extract the following asymptotic expansions,
\begin{equation*}
\begin{split}
 &\phi_{1 1}^{(\varepsilon)}(0,0) = b_1[0,0] + b_1[0,1] \varepsilon + b_1[0,2] \varepsilon^2 + o(\varepsilon^2), \\
 &\phi_{1 1}^{(\varepsilon)}(0,1) = b_1[1,0] + b_1[1,1] \varepsilon + o(\varepsilon), \\
 &\phi_{1 1}^{(\varepsilon)}(0,2) = b_1[2,0] + o(1),
\end{split}
\end{equation*}
and, for $j=1,2,3$,
\begin{equation*}
\begin{split}
 &\omega_{1 1 j}^{(\varepsilon)}(0,0) = a_{1 j}[0,0] + a_{1 j}[0,1] \varepsilon + a_{1 j}[0,2] \varepsilon^2 + o(\varepsilon^2), \\
 &\omega_{1 1 j}^{(\varepsilon)}(0,1) = a_{1 j}[1,0] + a_{1 j}[1,1] \varepsilon + o(\varepsilon), \\
 &\omega_{1 1 j}^{(\varepsilon)}(0,2) = a_{1 j}[2,0] + o(1).
\end{split}
\end{equation*}

From \eqref{eq:expansionphimc} and \eqref{eq:coeffphi} it follows that
\begin{equation} \label{eq:coeffb}
\begin{array}{l l l}
 b_1[0,0] = 1, & b_1[0,1] = -7, & b_1[0,2] = 67/2, \\
 b_1[1,0] = 5, & b_1[1,1] = -47, & b_1[2,0] = 33.
\end{array}
\end{equation}

By first calculating the coefficients in \eqref{eq:expansionomegamc}, we then get the following numerical values,
\begin{equation} \label{eq:coeffa}
\begin{array}{l l l}
 a_{1 1}[0,0] = 1, & a_{1 2}[0,0] = 2, & a_{1 3}[0,0] = 2, \\
 a_{1 1}[0,1] = 0, & a_{1 2}[0,1] = -8, & a_{1 3}[0,1] = -10, \\
 a_{1 1}[0,2] = 0, & a_{1 2}[0,2] = 34, & a_{1 3}[0,2] = 43, \\
 a_{1 1}[1,0] = 0, & a_{1 2}[1,0] = 6, & a_{1 3}[1,0] = 8, \\
 a_{1 1}[1,1] = 0, & a_{1 2}[1,1] = -48, & a_{1 3}[1,1] = -64, \\
 a_{1 1}[2,0] = 0, & a_{1 2}[2,0] = 34, & a_{1 3}[2,0] = 48.
\end{array}
\end{equation}

The asymptotic expansion for the quasi-stationary distribution can now be computed from the coefficients in equations \eqref{eq:coeffb} and \eqref{eq:coeffa} by applying the lemmas in Section \ref{sec:proof}.

From Lemma \ref{lmm:expansionrho} we get that the asymptotic expansion for the root of the characteristic equation is given by
\begin{equation*}
 \rho^{(\varepsilon)} = c_1 \varepsilon + c_2 \varepsilon^2 + o(\varepsilon^2),
\end{equation*}
where
\begin{equation} \label{eq:coeffc}
 c_1 = - \frac{b_1[0,1]}{b_1[1,0]} = \frac{7}{5}, \, c_2 = - \frac{b_1[0,2] + b_1[1,1] c_1 + b_1[2,0] c_1^2 / 2}{b_1[1,0]} = - \frac{1}{125}.
\end{equation}

Then, Lemma \ref{lmm:expansionomegarho} gives us the following asymptotic expansions,
\begin{equation*}
 \omega_{1 1 j}^{(\varepsilon)}(\rho^{(\varepsilon)}) = d_{1 j}[0] + d_{1 j}[1] \varepsilon + d_{1 j}[2] \varepsilon^2 + o(\varepsilon^2), \ j=1,2,3,
\end{equation*}
where
\begin{equation} \label{eq:coeffdformulas}
\begin{split}
 &d_{1 j}[0] = a_{1 j}[0,0], \\
 &d_{1 j}[1] = a_{1 j}[0,1] + a_{1 j}[1,0] c_1, \\
 &d_{1 j}[2] = a_{1 j}[0,2] + a_{1 j}[1,1] c_1 + a_{1 j}[1,0] c_2 + a_{1 j}[2,0] c_1^2 / 2.
\end{split}
\end{equation}

From \eqref{eq:coeffa}, \eqref{eq:coeffc}, and \eqref{eq:coeffdformulas}, we calculate
\begin{equation} \label{eq:coeffd}
\begin{array}{l l l}
 d_{1 1}[0] = 1, & d_{1 2}[0] = 2, & d_{1 3}[0] = 2, \\
 d_{1 1}[1] = 0, & d_{1 2}[1] = 2/5, & d_{1 3}[1] = 6/5, \\
 d_{1 1}[2] = 0, & d_{1 2}[2] = 9/125, & d_{1 3}[2] = 47/125.
\end{array}
\end{equation}

Finally, let us use Lemma \ref{lmm:expansionpi}. First, using \eqref{eq:coeffd}, we get
\begin{equation} \label{eq:coeffe}
\begin{split}
 &e_1[0] = d_{1 1}[0] + d_{1 2}[0] + d_{1 3}[0] = 5, \\
 &e_1[1] = d_{1 1}[1] + d_{1 2}[1] + d_{1 3}[1] = 8/5, \\
 &e_1[2] = d_{1 1}[2] + d_{1 2}[2] + d_{1 3}[2] = 56/125.
\end{split}
\end{equation}

Then, we can construct the asymptotic expansion for the quasi-stationary distribution,
\begin{equation*}
 \pi_j^{(\varepsilon)} = \pi_j[0] + \pi_j[1] \varepsilon + \pi_j[2] \varepsilon^2 + o(\varepsilon^2), \ j=1,2,3,
\end{equation*}
where
\begin{equation} \label{eq:coeffpiformulas}
\begin{split}
 &\pi_j[0] = d_{1 j}[0] / e_1[0], \\
 &\pi_j[1] = ( d_{1 j}[1] - e_1[1] \pi_j[0] ) / e_1[0], \\
 &\pi_j[2] = ( d_{1 j}[2] - e_1[2] \pi_j[0] - e_1[1] \pi_j[1] ) / e_1[0].
\end{split}
\end{equation}

Using \eqref{eq:coeffd}, \eqref{eq:coeffe}, and \eqref{eq:coeffpiformulas}, the following numerical values are obtained,
\begin{equation*}
\begin{array}{l l l}
 \pi_{1}[0] = 1/5, & \pi_{2}[0] = 2/5, & \pi_{3}[0] = 2/5, \\
 \pi_{1}[1] = -8/125, & \pi_{2}[1] = -6/125, & \pi_{3}[1] = 14/125, \\
 \pi_{1}[2] = 8/3125, & \pi_{2}[2] = -19/3125, & \pi_{3}[2] = 11/3125.
\end{array}
\end{equation*}
Note here that $(\pi_1[0],\pi_2[0],\pi_3[0])$ is the stationary distribution of the limiting Markov chain. It is also worth noticing that $\pi_1[n] + \pi_2[n] + \pi_3[n] = 0$ for $n=1,2$, as expected.


\end{document}